\newtheorem{theorem}{Theorem}
\newtheorem{lemma}[theorem]{Lemma}
\newtheorem{proposition}[theorem]{Proposition}
\newtheorem{definition}[theorem]{Definition}
\newtheorem{remark}[theorem]{Remark}
\pgfplotsset{compat=1.15}
\newenvironment{nouppercase}{%
  \renewcommand{\uppercasenonmath}[1]{}}{}
\title{Thermodynamics Formalism for a Class of Hyperbolic Transcendental Meromorphic Functions}
\author{Hamid Naderiyan}
\address{Department of Mathematics, University of California -- Riverside}
\email{hnaderiy@ucr.edu}
\begin{document}

\begin{abstract}
    This paper studies the thermodynamics formalism in the context of complex dynamics. We establish the thermodynamics formalism for the class of hyperbolic transcendental meromorphic functions of B--class  where the poles have bounded multiplicities, the Nevanlinna order is finite, and infinity is not an asymptotic value. We showed the existence and uniqueness of the conformal measure and the invariant Gibbs measure that is equivalent to the conformal measure.
    
\end{abstract}

\begin{nouppercase}
\maketitle
\end{nouppercase}

\tableofcontents

\section{Introduction}

\noindent The field of thermodynamic formalism has a rich history in physics and mathematics. There have been extensive studies on this field not only in various setting, but also its relation with other fields such multifractal analysis, dimension theory, and etc. , see \citep[section 1.1]{Barr} for a short overview.

\noindent The motivation of the current work is a question posed by M. Urba\'{n}ski on the thermodynamic formalism for the transcendental meromorphic functions of B--class where the poles have bounded multiplicities and infinity is not an asymptotic value under the hyperbolicity condition. This class of functions gained attention at least recently by a work of W. Bergweiler \& J. Kotus \cite{Bergweiler-Kotus} where they found a sharp estimate for the Hausdorff dimension of the escaping set of such functions. We call this class of functions the BK--class, see definition \ref{BK} for the precise definition. In fact, for a function $f$ of BK--class where all poles except finitely many have multiplicities bounded by $M$ and $\rho$ is the Nevanlinna order of $f$, it is shown by W. Bergweiler \& J. Kotus that $\text{HD}(I_f)\leq \frac{2M\rho }{2+M\rho}$, where $I_f$ is the escaping set of $f$, i.e., the set of all $z\in \mathbb{C}$ for which $|f^n(z)| \to \infty$ as $n\to \infty$, see \cite{Bergweiler-Kotus}. Note that the Julia set is the boundary of the escaping set as shown in \cite{Dominq}. 

\noindent In this work, we adapt the approach of M. Urba\'{n}ski and V. Mayer in \cite{Urbanski-Mayer} by which they established thermodynamic formalism for a class of transcendental meromorphic function subject to a growth rate for the derivative. Many family of functions are subject to this growth rate condition such as exponential functions, sine functions and elliptic functions, for more details see  \cite[p. 7]{Urbanski-Mayer}. It is important to note that the assumed growth condition implies that the function is hyperbolic in the sense that the derivative of the iterate of function grows exponentially all over the Julia set.  M. Urba\'{n}ski and V. Mayer in \cite{Urbanski-Mayer} first obtain a conformal measure for a tame potential that can be perceived as a perturbation of the geometric potential. Then they use it to construct a Gibbs measure that is also the equilibrium state. They also show the Bowen's formula that the Hausdorff dimension of the radial Julia set is equal to the zero of the pressure function. 

\noindent The context of conformal dynamics is another topic of research in dynamical systems and ergodic theory that can be traced back to the work of Patterson \cite{Pattersonn} where he studied the limit set of Fuchsian groups, and by D. Sullivan \cite{Sullivan} where he studied a broader class of discrete groups. We also refer the readers to \cite{Stratmann} for more details about the conformal measure and its relation with Gibbs measure.

\noindent The method of this paper for obtaining a conformal measure is the analysis of the transfer operator. First, we consider a potential function (equation (\ref{potential function})) associated to a transcendental meromorphic function $f$ of BK--class (definition \ref{BK}) which is expressed in terms of a the derivative of $f$ with respect to an appropriate Riemannian metric (equation (\ref{metric derivative})). Then we assign a transfer operator for this potential (equation (\ref{transfer operator})). One major step is establishing the boundedness of this operator (proposition \ref{boundedness of transfer operator}). The derivative of the function $f \in$ BK at $z$ has an estimate (inequality \ref{derivative growth rate}) in terms of $f(z)$ and $z$ as long as the value is large enough. This estimate along Rippon-Stallard estimate (inequality (\ref{Rippon-Stallard})) under the assumption of hyperbolicity give rise to nice estimate all over the Julia set of $f$ (lemma \ref{estimate for tau derivative stronger}). This estimate helps to show that the transfer operator is bounded. Another ingredient of this fact is the Borel's theorem  in Nevanlinna theory (estimate (\ref{Borel theorem})). Later, we construct a family of measure that satisfies a tightness property (lemma \ref{tighetness}). Applying Prokhorov's theorem we obtain the conformal measure for the function $f$, the central result of this paper (proposition \ref{conformal measure existence}).  By taking integral of some fixed point of the transfer operator against this conformal measure we obtain an invariant Gibbs measure that is equivalent to the conformal measure (equation (\ref{invariant measure})). The fact that the escaping set of $f$ has measure $0$ (lemma \ref{limit inf bounded}) is used to show the uniqueness of the Gibbs measure that is also ergodic with support on the radial Julia set which is the main result of this work, see theorem \ref{main theorem}.

\section{Preliminaries}
Throughout this paper we make conventions that 
\begin{itemize}
    \item $c, r, t$ are positive real constants,
    \item $f$ is a transcendental meromorphic function whose poles have multiplicity at most $M=M_f$,
    \item $\tau$ is a parameter that is usually subject to $1<\tau<1+\frac{1}{M}$,
    \item $M_u$ is the upper bound of a series given in (\ref{Borel theorem}),
    \item $\rho=\rho(f)$ is the Nevanlinna order of $f$,
    \item $\mathcal{J}(f)$ is the Julia set of $f$ minus $\infty$,
    \item $K$ is a real number greater than $1$,
    \item B--class is the  meromorphic Eremenko-Lyubich class,
    \item BK--class is the Bergweiler-Kotus class (see definition \ref{BK}),
    \item $T=T_f$ is a positive number that usually satisfies dist$(\mathcal{J}(f), 0)\geq T$,
    \item $\delta=(1/4)$ dist$(\mathcal{J}(f),\overline{\mathcal{P}(f)})$
    \item $\mathcal{L}_t$ is the transfer operator associated to the potential function $\Phi_t$,
    \item $\varphi$ is a bounded continuous function on $\mathcal{J}(f)$,
    \item $\mathbb{1}_B$ is the characteristic function of the set $B$,
    \item $A(x) \asymp B(x)$ stands for $c A(x) \leq B(x) \leq CA(x)$, where $0<c<C$ are constants independent of $x$.
\end{itemize}

\noindent The Fatou set $\mathcal{F}(f)$ of a (non-linear) meromorphic function $f: \mathbb{C} \to \hat{\mathbb{C}}=\mathbb{C}\cup\{\infty\}$ is defined to be the set of all $z \in \mathbb{C}$ for which all the iterates $f^n$ of $f$ form a normal family in some neighborhood of $z$. The Julia set is defined to be $\hat{\mathcal{J}}(f)= \hat{\mathbb{C}}\setminus\mathcal{F}(f)$. Also, we say $a$ is an asymptotic value of $f$ if there exists a continuous curve $\lambda(t)$ such that $\lambda(t) \to \infty$ and $f(\lambda(t))\to a$ as $t \to \infty$. The set of singularities of $f^{-1}$ coincides with the set consisting of critical values or asymptotic values of $f$. We denote the finite singularities of $f^{-1}$ by sing($f^{-1}$).  The post-critical set $\mathcal{P}(f)$ of a function $f$ consists of singularities of $f^{-1}, f^{-2}, f^{-3}, ...$. We develop almost all of our theory in this paper over the set $\mathcal{J}(f):=\hat{\mathcal{J}}(f) \setminus \{\infty\}$ where we still call it the Julia set. Also, the escaping set of $f$ is defined to be
$$I(f)=\{z\in \mathbb{C} \; : \; |f^n(z)| \to \infty \text{ \; as\; } n\to \infty\}.$$
Additionally, we define the radial Julia set of $f$ to be $\mathcal{J}(f) \setminus I(f)$. The hyperbolic dimension of $f$ is defined to be the Hausdorff dimension of the radial Julia set. We say a meromorphic transcendental function $f$ is in Eremenko-Lyubich class if sing$(f^{-1})$ is bounded and we write $f \in $ B. For more detailed explanations, we refer to \citep[ch. 13]{MeromorphicDynamicsII}. 

\noindent The Nevanlinna order of $f$ is given by
$$\rho=\rho(f)=\lim_{r \to \infty}\dfrac{\log T(r,f)}{\log r}$$
see \citep[p. 13]{Urbanski-Mayer} for the definition of the Nevanlinna characteristic $ T(r,f)$. A theorem of Borel for a meromorphic function $f$ states that the following series 
$$\sum_{\substack{f(z)=w \\ z\neq 0}}\dfrac{1}{|z|^{u}},$$
converges when $u>\rho$ and diverges when $u<\rho$, except for possibly two (Picard's) exceptional points $w$ as stated in \citep[97]{Tsuji}. Also, under the assumption dist($0,f^{-1}(B))>0$, the above series converges uniformly in $w$, i.e. when $u>\rho$, there exits $M_u>0$ such that 
\begin{equation}\label{Borel theorem}
    \sum_{\substack{f(z)=w \\ z\neq 0}}\dfrac{1}{|z|^{u}} \leq M_u, \; \; \; \; w \in B,
\end{equation}
shown in \citep[p. 16]{Urbanski-Mayer}. 

\noindent We are ready to assert the BK--class (W. Bergweiler \& J. Kotus) functions central to this paper. 
\begin{definition}[\textbf{$\mathbf{BK}-$class}]\label{BK}
    We say a transcendental meromorphic function $f$ is in class Bergweiler-Kotus if all the following conditions hold,
    \begin{itemize}
        \item[i)] $f \in$ B, i.e., $f$ is of Eremenko–Lyubich class,
        \item[ii)] the Nevanlinna order $\rho$ is finite,
        \item[iii)] $\infty$ is not an asymptotic value,
        \item[iv)] there exists $M \in \mathbb{N}$ such that the multiplicity of all poles is at most $M$.
    \end{itemize}
    In this case, we write $f\in $ BK.
\end{definition}
\noindent We make an important remark that there is no consensus on the definition of hyperbolic transcendental meromorphic function. We know that the expanding property of a rational function $f$
\begin{equation}
    |(f^n)'(z)|\geq c K^n, \hspace{1cm} c>0, \;\; K>1, \; \; z \in \mathcal{J}(f), \; \; n \in \mathbb{N},
\end{equation}
is equivalent to 
\begin{equation}\label{disjoint hyp}
  \mathcal{J}(f)\cap \overline{\mathcal{P}(f)}=\emptyset,  
\end{equation}
while this is not true for transcendental meromorphic functions. In this work, we focus on functions with condition (\ref{disjoint hyp}).
\begin{definition}[\textbf{Hyperbolic Function}]\label{toplogically hyp}
    We say a meromorphic function $f: \mathbb{C} \to \hat{\mathbb{C}}$ is hyperbolic if it satisfies condition (\ref{disjoint hyp}).
\end{definition}

\noindent Note that Picard's exceptional points are in the Fatou set for a topologically hyperbolic function since they are asymptotic values. Additionally, we want to emphasize that for a hyperbolic transcendental mermorphic function of B--class we have dist$(\mathcal{J}(f), \overline{\mathcal{P}(f)})>0$ by a result of Rippon and Stallard, see theorem B in \citep{stallard}. This gap between the Julia set and the closure of the post-critical set enhance the work with the inverse branches in the sense that for any $w \in \mathcal{J}(f)$ and $x,y$ close enough to $w$, then $f_z^{-n}(x)$ and $f_z^{-n}(y)$ make sense for all $n \in \mathbb{N}, z \in f^{-n}(\{w\})$. In fact, by setting 
\begin{equation}\label{delta}
    \delta:=\frac{1}{4}\textup{dist}(\mathcal{J}(f),\overline{\mathcal{P}}_f)
\end{equation}
for every $w \in \mathcal{J}(f)$ and every $n \in \mathbb{N}$ the map 
$$f^n : f^{-n}\big( D(w,2\delta) \big) \to D(w,2\delta)$$ 
is a covering map, so there is a countable family of disjoint open sets $\{ O_z\}_z$ such that 
$$f^{-n}\big( D(w,2\delta) \big)=\cup_{z} O_z$$
where 
$$f^{-n}: D(w,2\delta) \to O_z, \hspace{1cm} \{z\}=f^{-n}\big( D(w,2\delta) \big) \cap O_z$$
is analytic for each $z$. Thus, we obtain a countable family of inverse functions all being analytic,
$$\{f_z^{-n} \hspace{2mm}: \hspace{2mm} f_z^{-n}: D(w,2\delta) \to O_z, \hspace{3mm}  f_z^{-n}(w)=z  \}.$$
\begin{remark}\label{f_z=f_z'}
    Note that $f_z^{-n}=f_{z'}^{-n}$ on $D(w,2\delta) \cap D(w',2\delta)$ by the identity theorem.
\end{remark}

\noindent We also consider the Riemannian metrics
$$ds=\frac{|dz|}{1+|z|^{\tau}}, \; \; \; \; \tau>0,$$
on the complex plane $\mathbb{C}$. A straightforward calculation for a complex function $f: \mathbb{C} \to \hat{\mathbb{C}}$  yields that the norm of the ds-derivative (derivative with respect to ds metric) of $f$ is given by
$$\| f' \|_{\tau}(z)=|f'(z)|\dfrac{1+|z|^{\tau}}{1+|f(z)|^{\tau}}.$$
A simple calculation gives the following lemma.

\begin{lemma}\label{distortion for T}
   For every $T, \tau>0$, there exits $K_{T,\tau}>1$ such that for any complex function $f$ and any complex number $z$ with $|z|, |f(z)| \geq T>0$ we have,
    $$K_{T,\tau}^{-1} \dfrac{|z|^{\tau}}{|f(z)|^{\tau}} \leq \dfrac{1+|z|^{\tau}}{1+|f(z)|^{\tau}} \leq K_{T,\tau}\dfrac{|z|^{\tau}}{|f(z)|^{\tau}}.$$
\end{lemma}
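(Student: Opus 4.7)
The plan is to factor out the ratio $|z|^\tau/|f(z)|^\tau$ from the target expression and show that the leftover factor is uniformly bounded above and below as soon as $|z|,|f(z)|\geq T$. Concretely, I would write
$$\frac{1+|z|^{\tau}}{1+|f(z)|^{\tau}} \;=\; \frac{|z|^{\tau}}{|f(z)|^{\tau}}\cdot \frac{1+|z|^{-\tau}}{1+|f(z)|^{-\tau}},$$
so the whole content of the lemma is to bound the second factor by a constant depending only on $T$ and $\tau$.

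For this second factor, the hypotheses $|z|\geq T$ and $|f(z)|\geq T$ give $|z|^{-\tau},|f(z)|^{-\tau}\in (0,T^{-\tau}]$, hence
$$1\;\leq\;1+|z|^{-\tau}\;\leq\;1+T^{-\tau}, \qquad 1\;\leq\;1+|f(z)|^{-\tau}\;\leq\;1+T^{-\tau}.$$
Dividing these two inequalities shows that the ratio $(1+|z|^{-\tau})/(1+|f(z)|^{-\tau})$ lies in the interval $\bigl[(1+T^{-\tau})^{-1},\,1+T^{-\tau}\bigr]$. Setting $K_{T,\tau}:=1+T^{-\tau}>1$ then yields the two-sided estimate of the lemma.

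There is essentially no obstacle here: the statement is an elementary algebraic identity followed by a one-line inequality, and the constant $K_{T,\tau}$ can be read off explicitly from the factorization. The only minor point worth recording is that $K_{T,\tau}\to 1$ as $T\to\infty$, which reflects the intuition that for very large values of $|z|$ and $|f(z)|$ the $+1$ in numerator and denominator is negligible and $\frac{1+|z|^\tau}{1+|f(z)|^\tau}$ is essentially $\frac{|z|^\tau}{|f(z)|^\tau}$.
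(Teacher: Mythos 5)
Your proposal is correct and is precisely the ``simple calculation'' the paper alludes to without writing out: factor out $|z|^\tau/|f(z)|^\tau$ and bound the remaining ratio $(1+|z|^{-\tau})/(1+|f(z)|^{-\tau})$ between $(1+T^{-\tau})^{-1}$ and $1+T^{-\tau}$. The explicit constant $K_{T,\tau}=1+T^{-\tau}$ and the observation that it tends to $1$ as $T\to\infty$ are both accurate.
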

\noindent If $f$ is a topologically hyperbolic function, there exists $c \not\in \mathcal{J}(f)$ and so $0 \not\in \mathcal{J}_{gfg^{-1}}=g(\mathcal{J}(f))$ for $g(z)=z+c$. Therefore, we can work with topologically hyperbolic B--class functions where $0 \not \in \mathcal{J}(f)$ without loss of generality. This means, there exists $T>0$ such that,
\begin{equation}\label{T}
    |z|, |f(z)| \geq T>0, \hspace{.5cm} z \in\mathcal{J}(f)
\end{equation}
and so instead of the actual $\tau$-norm of the derivative given above, we work with a simpler form below,
\begin{equation}\label{metric derivative}
    |f'(z)|_{\tau}:=|f'(z)|\dfrac{|z|^{\tau}}{|f(z)|^{\tau}}.
\end{equation}
More precisely, the potential functions (given in (\ref{potential function})) associated with $|.|_{\tau}$ and $\|.\|_{\tau}$ differ only by a constant for a fixed $t$, hence the thermodynamic formalisms for both of them are equivalent as shown in \citep[p. 444]{Munday}.

\noindent For every $R>0$ we set $B(R)=\{z: |z|>R \}\cup \{\infty\}$ and 
$D(z,R)=\{ y \in \mathbb{C}: |y-z|<R\}$. We know for $f \in $ BK, there exists $R_0>0$ such that $f^{-1}(B(R_0))=\cup_{j=1}^{\infty} U_j,$
where each $U_i$ is a bounded simply connected region containing exactly one of the poles of $f$, say $a_j$. Without loss of generality, we can assume further
$$1\leq|a_1|\leq|a_2|\leq...,$$
as well as $R_0>1$ and $|f(0)|<R_0$. Also, we have
$$f\sim \left( \frac{b_j}{z-a_j} \right)^{m_j}, \; \; \; z \to a_j,$$
\begin{equation}\label{derivative near pole}
    |f'|\sim  \frac{1}{|b_j|}|f|^{1+\frac{1}{m_j}}, \; \; \; z \to a_j,
\end{equation}
where $m_j$ is the multiplicity of the pole $a_j$ and $b_j$ is a complex number such that
$$|b_j|\leq 4R_0|a_j|.$$
The details can be found in \citep[p. 5374]{Bergweiler-Kotus}. Combining this inequality with the estimate (\ref{derivative near pole}) results in the following essential growth rate of the derivative, also known as the rapid growth rate in \citep[p. 5]{Urbanski-Mayer},
\begin{equation}\label{derivative growth rate}
|f'(z)|\geq c_0\frac{1}{|z|}|f(z)|^{1+\frac{1}{m_j}}\geq c_0\frac{1}{|z|}|f(z)|^{1+\frac{1}{M}}, \; \; \; z \to a_j,
\end{equation}
where $c_0$ only depends on $R_0$. In addition to this, there is a general estimate for the derivative of a topologically hyperbolic B--class function $f$ obtained by Rippon and Stallard in \cite[p. 3252]{stallard},
    \begin{equation}\label{Rippon-Stallard}
        |(f^n)'(z)|>cK^n\dfrac{|f^n(z)|+1}{|z|+1}
    \end{equation}
    where $c>0$, $K>1$ and $z$ is any point of the Julia set except where $f^n$ is not analytic. Now this estimate combined with lemma \ref{distortion for T} gives the following estimate suitable for our setting,
    \begin{equation}\label{Rippon-Stallard-modified}
        |(f^n)'(z)|>c_1K_1^n\dfrac{|f^n(z)|}{|z|},
    \end{equation}
    where $c_1>0$, $K_1>1$ and $z$ is any point of the Julia set except where $f^n$ is not analytic. Before the following lemma, we assume $t>0$ is a real number throughout this paper. The following lemma gives an estimate for the $\tau-$norm derivative of $f$ for a general topologically hyperbolic function.
\begin{lemma}\label{estimate for tau derivative}
    There exists $c>0$ such that for every $w \in \mathcal{J}(f)$ and every $z\in f^{-1}(\{w\})$ we have,
        $$\left|(f_z^{-1})'(w)\right|_{\tau}^{t}=|f'(z)|_{\tau}^{-t} \leq c^{t}\dfrac{|w|^{-(1-\tau)t}}{|z|^{(\tau-1)t}}.$$
\end{lemma}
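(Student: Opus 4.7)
The plan is purely algebraic: unwind the definition of the metric derivative and invoke the Rippon-Stallard-modified estimate at $n=1$.

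First I would note that $|(f_z^{-1})'(w)|_\tau^{t} = |f'(z)|_\tau^{-t}$ by the chain rule for the $\tau$-derivative, so it suffices to bound the right-hand side. Using the definition (\ref{metric derivative}) together with the identity $f(z) = w$, this quantity rewrites as
$$|f'(z)|_\tau^{-t} = |f'(z)|^{-t} \cdot \frac{|w|^{\tau t}}{|z|^{\tau t}}.$$

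Next I would verify that the hypotheses make $z$ an admissible point for the inequality (\ref{Rippon-Stallard-modified}): the condition $w \in \mathcal{J}(f) \subset \mathbb{C}$ forces $z$ not to be a pole of $f$, while backward invariance of the Julia set places $z$ in $\mathcal{J}(f)$. Applying (\ref{Rippon-Stallard-modified}) with $n=1$ therefore yields $|f'(z)| \geq c_1 K_1 \, |w|/|z|$, and raising this to the $-t$ power gives
$$|f'(z)|^{-t} \leq (c_1 K_1)^{-t} \cdot \frac{|z|^{t}}{|w|^{t}}.$$

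Combining the two displays collapses most of the $|z|$ and $|w|$ factors, leaving
$$|f'(z)|_\tau^{-t} \leq (c_1 K_1)^{-t} \cdot \frac{|w|^{(\tau-1)t}}{|z|^{(\tau-1)t}},$$
which is the claimed bound once one rewrites $(\tau-1)t = -(1-\tau)t$ in the numerator and sets $c = (c_1 K_1)^{-1}$. There is no real obstacle here: the estimate does not require $\tau < 1 + 1/M$ and uses nothing about the multiplicity of poles or the Nevanlinna order beyond what is already packaged into the Rippon-Stallard estimate. Conceptually, this lemma is just the $n=1$ base case of the iterated estimate that subsequent results (such as the key \emph{estimate for tau derivative stronger}) will upgrade along orbits, and its main role here is to fix the correct algebraic form $|w|^{-(1-\tau)t}/|z|^{(\tau-1)t}$ that will appear when bounding the transfer operator.
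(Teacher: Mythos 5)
Your proof is correct and follows essentially the same route as the paper: unwind the definition of $|\cdot|_\tau$ and apply the Rippon--Stallard expansion estimate (\ref{Rippon-Stallard-modified}) at $n=1$, then collect powers of $|z|$ and $|w|$. The only differences are expository --- you spell out the two factors separately and verify the admissibility of $z$ --- but the argument and the resulting constant $c = (c_1K_1)^{-1}$ coincide with the paper's.
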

\begin{proof}
   By the Rippon-Stallard expansion estimate (\ref{Rippon-Stallard-modified}) for $n=1$ we find,
    $$|f'(z)|_{\tau}^{-t}\leq c_1^{-t} K_1^{-t} \dfrac{|f(z)|^{-t}}{|z|^{-t}}\dfrac{|z|^{-\tau t}}{|f(z)|^{-\tau t}}=c_1^{-t} K_1^{-t} \dfrac{|f(z)|^{-(1-\tau)t}}{|z|^{(\tau -1)t}}=c_1^{-t} K_1^{-t} \dfrac{|w|^{-(1-\tau)t}}{|z|^{(\tau -1)t}}.$$
\end{proof}
\noindent However, if $f$ is also in BK--class, we can obtain a better estimate for the $\tau-$norm derivative of $f$.
\begin{lemma}\label{estimate for tau derivative stronger}
    There exists $c>0$ such that for every $w \in \mathcal{J}(f)$ and every $z\in f^{-1}(\{w\})$ we have,
    $$\left|(f_z^{-1})'(w)\right|_{\tau}^{t}=|f'(z)|_{\tau}^{-t} \leq c^{t}\dfrac{|w|^{-(1+\frac{1}{M}-\tau)t}}{|z|^{(\tau-1)t}}.$$
\end{lemma}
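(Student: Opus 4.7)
The plan is to combine the two derivative estimates available for a function $f \in {\rm BK}$: the Rippon--Stallard estimate (\ref{Rippon-Stallard-modified}), which already gave the weaker bound in Lemma \ref{estimate for tau derivative}, and the rapid growth rate (\ref{derivative growth rate}), which is stronger but requires $z$ to lie near a pole. The natural dichotomy is therefore on the size of $|w|=|f(z)|$, split at the threshold $R_0$ from the tract decomposition $f^{-1}(B(R_0))=\bigcup_j U_j$.

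\textbf{Case 1: $|w|\geq R_0$.} Since $z\in f^{-1}(B(R_0))$, the point $z$ lies in some $U_j$, so (\ref{derivative growth rate}) applies and gives
$$|f'(z)|\geq c_0\,\frac{|f(z)|^{1+1/M}}{|z|}=c_0\,\frac{|w|^{1+1/M}}{|z|}.$$
Multiplying by $|z|^\tau/|w|^\tau$ according to the definition (\ref{metric derivative}) yields
$$|f'(z)|_\tau\;\geq\;c_0\,|z|^{\tau-1}\,|w|^{\,1+\frac{1}{M}-\tau}.$$
Raising to the power $-t$ produces exactly the right-hand side of the lemma with constant $c_0^{-1}$.

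\textbf{Case 2: $T\leq|w|<R_0$.} Here $|w|$ is bounded above by $R_0$ and below by $T$ (from (\ref{T})), so the gap between the two target exponents of $|w|$ is harmless. Starting from Lemma \ref{estimate for tau derivative},
$$|f'(z)|_\tau^{-t}\;\leq\;c^{t}\,\frac{|w|^{-(1-\tau)t}}{|z|^{(\tau-1)t}}\;=\;c^{t}\,|w|^{t/M}\cdot\frac{|w|^{-(1+\frac{1}{M}-\tau)t}}{|z|^{(\tau-1)t}}\;\leq\;(c\,R_0^{1/M})^{t}\,\frac{|w|^{-(1+\frac{1}{M}-\tau)t}}{|z|^{(\tau-1)t}},$$
so the factor $|w|^{t/M}$ is absorbed into a slightly larger constant.

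Taking the maximum of the constants obtained in the two cases yields a single $c>0$ that works for all $w\in\mathcal{J}(f)$ and all $z\in f^{-1}(\{w\})$. The only subtlety I anticipate is that (\ref{derivative growth rate}) is written asymptotically ``as $z\to a_j$'', so one must observe that the constant $c_0$ depends only on $R_0$ and therefore the inequality is uniform over the whole tract $U_j$ once $R_0$ has been fixed; everything else is a direct rearrangement.
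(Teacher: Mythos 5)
Your proof is correct and follows essentially the same approach as the paper: split on whether $|w|\geq R_0$ (apply the rapid growth rate near the poles) or $T\leq|w|<R_0$ (apply Lemma \ref{estimate for tau derivative} and absorb the bounded factor $|w|^{t/M}\leq R_0^{t/M}$ into the constant), then take the maximum of the two constants.
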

\begin{proof}
    Given $w\in \mathcal{J}(f)\cap B(R_0)$, the inequality (\ref{derivative growth rate}) gives the following estimate,
    $$|f'(z)|_{\tau}^{-t}\leq c_0^{-t}\dfrac{|f(z)|^{-(1+\frac{1}{M})t}}{|z|^{-t}}\dfrac{|z|^{-\tau t}}{|f(z)|^{-\tau t}} = c_0^{-t}\dfrac{|f(z)|^{-(1+\frac{1}{M}-\tau)t}}{|z|^{(\tau-1)t}} =c_0^{-t}\dfrac{|w|^{-(1+\frac{1}{M}-\tau)t}}{|z|^{(\tau-1)t}}.$$
    Next, if $w\in \mathcal{J}(f)$ and $|w|\leq R_0$, we apply lemma \ref{estimate for tau derivative} to write,
    \begin{align*}
        \left|(f_z^{-1})'(w)\right|_{\tau}^{t}=|f'(z)|_{\tau}^{-t} \leq c^{t}\dfrac{|w|^{-(1-\tau)t}}{|z|^{(\tau-1)t}}& = c^t|w|^{\frac{1}{M}t}\dfrac{|w|^{-(1+\frac{1}{M}-\tau)t}}{|z|^{(\tau-1)t}}\\
        &\leq (cR_0^{\frac{1}{M}})^t\dfrac{|w|^{-(1+\frac{1}{M}-\tau)t}}{|z|^{(\tau-1)t}}.
    \end{align*}
    Now if we set $c_1=\max \left(c_0^{-1}, cR_0^{\frac{1}{M}} \right)$, for every $w \in \mathcal{J}(f)$ we find 
    $$\left|(f_z^{-1})'(w)\right|_{\tau}^{t}=|f'(z)|_{\tau}^{-t} \leq c_1^{t}\dfrac{|w|^{-(1+\frac{1}{M}-\tau)t}}{|z|^{(\tau-1)t}}$$
\end{proof}
\begin{definition}[\textbf{Gibbs State}]
    We say a Borel probability measure $\mu$ is a Gibbs state for $f$ when there exist real constants $P$ and $K>1$ such that for every $z \in \mathcal{J}(f)$ and every $n \in \mathbb{N}$, 
    $$K^{-1}\leq \dfrac{\mu \left(f_z^{-n}\left(D(f^n(z),\delta) \right) \right)}{\exp \left(S_n\Phi_t(z)-nP \right)} \leq K.$$
\end{definition}
\begin{definition}[\textbf{Conformal Measure}]
    We say a Borel probability measure $m$ on $\mathcal{J}(f)$ is $\psi$-conformal if 
$$m(f(A))=\int_A\psi dm,$$
for every Borel set $A \subseteq \mathcal{J}(f)$ such that $f$ is injective on $A$, where $\psi: \mathcal{J}(f) \to \mathbb{R}$.
\end{definition}
\noindent The main targets of this paper is establishing the existence and uniqueness of the Gibbs state and conformal measure. First, we establish the existence of a conformal measure. Then an integral against this conformal measure gives an invariant Gibbs state for $f$. Eventually we argue that these two measures are equivalent and unique.
\section{Geometric Potential and Transfer Operator Properties}
\noindent We define the geometric potential for each $t>0$,
$$\Phi_t: \mathcal{J}(f) \to \mathbb{R},$$
\begin{equation}\label{potential function}
    \Phi_t(z):=t\log |f'(z)|^{-1}_{\tau}=t\log \big|(f_z^{-1})'\big(f(z)\big)\big|_{\tau};
\end{equation}
and the transfer operator,
$$\mathcal{L}_t: C_b(\mathcal{J}(f),\mathbb{C}) \to C_b(\mathcal{J}(f),\mathbb{C}),$$
\begin{equation}\label{transfer operator}
    \mathcal{L}_t \varphi(w):=\sum_{f(z)=w}\exp(\Phi_t(z))\varphi(z)=\sum_{f(z)=w}|f'(z)|_{\tau}^{-t}\varphi(z)=\sum_{f(z)=w} \big|(f_z^{-1})'(w)\big|_{\tau}^t\varphi(z),
\end{equation}

\noindent where $C_b(\mathcal{J}(f),\mathbb{C})$ is the space of complex-valued bounded continuous functions on $\mathcal{J}(f)$ and $\varphi \in C_b(\mathcal{J}(f),\mathbb{C})$. We show below $\mathcal{L}_t$ is a bounded operator under certain conditions. Note that $\mathbb{1}_X$ denotes the characteristic function of $X$, where $X \subseteq \mathcal{J}(f)$. When $X=\mathcal{J}(f)$ we drop the subscript and we simply write $\mathbb{1}$ instead. Also, we denote the supremum norm by $\| . \|$, i.e. $\| \varphi \|=\sup_{z \in \mathcal{J}(f)} |\varphi(z)|$. 

\begin{lemma}\label{estimate for transfer operator}
    There exists $c>0$ such that for every $w \in \mathcal{J}(f)$,
         $$|\mathcal{L}_t \varphi (w)|\leq c^t|w|^{-(1+\frac{1}{M}-\tau)t}\|\varphi\|\sum_{f(z)=w}\dfrac{1}{|z|^{(\tau-1)t}}.$$
\end{lemma}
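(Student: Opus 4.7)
The plan is to obtain this bound by a direct term-by-term estimate of the defining sum for $\mathcal{L}_t\varphi(w)$, using Lemma \ref{estimate for tau derivative stronger} as the key input. First I would pass the absolute value inside the sum via the triangle inequality, writing
$$|\mathcal{L}_t\varphi(w)|\leq \sum_{f(z)=w}\bigl|(f_z^{-1})'(w)\bigr|_\tau^{t}\,|\varphi(z)|.$$
Then I would bound $|\varphi(z)|\leq \|\varphi\|$ uniformly, which is legitimate since $\varphi\in C_b(\mathcal{J}(f),\mathbb{C})$.

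Next, I would invoke Lemma \ref{estimate for tau derivative stronger} with the same constant $c$ it produces: for every $w\in\mathcal{J}(f)$ and every preimage $z\in f^{-1}(\{w\})$,
$$\bigl|(f_z^{-1})'(w)\bigr|_\tau^{t}\leq c^{t}\,\frac{|w|^{-(1+\frac{1}{M}-\tau)t}}{|z|^{(\tau-1)t}}.$$
Substituting this inside the sum and factoring out the $w$-dependent quantity, which does not depend on the summation index $z$, gives exactly
$$|\mathcal{L}_t\varphi(w)|\leq c^{t}\,|w|^{-(1+\frac{1}{M}-\tau)t}\,\|\varphi\|\sum_{f(z)=w}\frac{1}{|z|^{(\tau-1)t}},$$
which is the desired inequality.

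There is essentially no obstacle here: the statement is a one-step bookkeeping consequence of Lemma \ref{estimate for tau derivative stronger} together with the uniform bound on $\varphi$. The only thing worth noting is that at this stage we make no claim that the right-hand side is finite — that will be the content of the subsequent boundedness proposition, where Borel's estimate (\ref{Borel theorem}) is invoked to control the series $\sum_{f(z)=w}|z|^{-(\tau-1)t}$ (requiring $(\tau-1)t>\rho$, which is compatible with $1<\tau<1+1/M$ for sufficiently large $t$). For the present lemma, the inequality holds in $[0,\infty]$ regardless.
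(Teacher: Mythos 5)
Your proof is correct and takes essentially the same approach as the paper: triangle inequality, the uniform bound $|\varphi(z)|\leq\|\varphi\|$, and a term-by-term application of Lemma \ref{estimate for tau derivative stronger} with the $w$-dependent factor pulled out of the sum. The paper's proof is a one-line compression of exactly this argument.
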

\begin{proof}
   Lemma \ref{estimate for tau derivative stronger} gives the following estimate,
    \begin{align*}
        |\mathcal{L}_t \varphi(w)|=\left|\sum_{f(z)=w}|f'(z)|_{\tau}^{-t}\varphi(z)\right| & \leq c^{t}|w|^{-(1+\frac{1}{M}-\tau)t}\|\varphi\|\sum_{f(z)=w}\dfrac{1}{|z|^{(\tau-1)t}}.
    \end{align*}
\end{proof}
\noindent Before proving the following proposition we remind the readers that the upper bound $M_u$ was given in (\ref{Borel theorem}). It is clear that by our assumption that the Julia set is bounded away from the origin such $M_u$ exists on the Julia set. In fact, when $t> \frac{\rho}{\tau -1}$ and $\tau>1$, for every $w \in \mathcal{J}(f)$ we have,
\begin{equation}\label{Borel theorem 2}
    \sum_{f(z)=w}\dfrac{1}{|z|^{(\tau-1)t}} \leq M_{(\tau-1)t}.
\end{equation}
\begin{proposition}\label{boundedness of transfer operator}
     The transfer operator $\mathcal{L}_t$ is a bounded operator on $C_b(\mathcal{J}(f),\mathbb{C})$ equipped with the supremum norm, when $ 1<\tau<1+\frac{1}{M}$ and $t> \frac{\rho}{\tau -1}$. In fact, there exists $c>0$ such that 
     $$\|\mathcal{L}_t\| \leq c^tM_{(\tau-1)t}.$$
\end{proposition}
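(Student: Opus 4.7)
The plan is to simply chain together the pointwise estimate from Lemma \ref{estimate for transfer operator} with the Borel-type bound (\ref{Borel theorem 2}) and the fact that $|w|$ is bounded away from zero on $\mathcal{J}(f)$. Under the hypotheses $1<\tau<1+\frac{1}{M}$ and $t>\frac{\rho}{\tau-1}$, the exponent $(\tau-1)t$ strictly exceeds $\rho$, so (\ref{Borel theorem 2}) applies and yields
\[
\sum_{f(z)=w}\frac{1}{|z|^{(\tau-1)t}}\le M_{(\tau-1)t}
\]
uniformly in $w\in\mathcal{J}(f)$. Plugging this into Lemma \ref{estimate for transfer operator} gives
\[
|\mathcal{L}_t\varphi(w)|\le c^{t}\,|w|^{-(1+\frac{1}{M}-\tau)t}M_{(\tau-1)t}\|\varphi\|.
\]

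Next I would handle the prefactor $|w|^{-(1+\frac{1}{M}-\tau)t}$. Because $\tau<1+\frac{1}{M}$ the exponent is negative, and by (\ref{T}) we have $|w|\ge T$ throughout $\mathcal{J}(f)$, so
\[
|w|^{-(1+\frac{1}{M}-\tau)t}\le T^{-(1+\frac{1}{M}-\tau)t}.
\]
Absorbing $T^{-(1+\frac{1}{M}-\tau)}$ into a redefined constant $c$ (independent of $t$ once $T$ and $M$ are fixed) yields $\|\mathcal{L}_t\varphi\|\le c^{t}M_{(\tau-1)t}\|\varphi\|$, which is the asserted bound.

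The one point that is not a pure computation is verifying that $\mathcal{L}_t\varphi$ actually lies in $C_b(\mathcal{J}(f),\mathbb{C})$ rather than merely being bounded. For this I would exploit the setup introduced around (\ref{delta}): for each $w\in\mathcal{J}(f)$ the disk $D(w,2\delta)$ admits a countable family of analytic inverse branches $\{f_z^{-1}\}_{z\in f^{-1}(w)}$, so on $D(w,2\delta)\cap\mathcal{J}(f)$ the summands $|(f_z^{-1})'(\cdot)|_\tau^{t}\varphi(f_z^{-1}(\cdot))$ are continuous. Each partial sum is continuous, and the same estimate used above, applied uniformly for $w'$ in a small neighborhood of $w$, shows the tail of the series is uniformly small (the Borel bound is locally uniform in $w'$ once we restrict the branches to a neighborhood where each $|z'|\ge \tfrac12|z|$, say). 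A standard Weierstrass $M$-test argument then gives uniform convergence on $D(w,\delta)\cap\mathcal{J}(f)$, hence continuity of $\mathcal{L}_t\varphi$ at $w$.

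The bulk of the work has already been done in Lemma \ref{estimate for tau derivative stronger} and the derivation of (\ref{Borel theorem 2}); the principal obstacle is not the scalar estimate but keeping track of the continuity of the resulting infinite sum, which the $\delta$-separation between $\mathcal{J}(f)$ and $\overline{\mathcal{P}(f)}$ makes routine.
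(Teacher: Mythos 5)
Your argument is correct and follows the paper's proof exactly: combine Lemma \ref{estimate for transfer operator} with the Borel-type bound (\ref{Borel theorem 2}) and then use $|w|\ge T$ to absorb the $|w|^{-(1+\frac1M-\tau)t}$ prefactor into the constant. The only addition is your (reasonable) sketch of why $\mathcal{L}_t\varphi$ is continuous, a point the paper takes for granted in defining $\mathcal{L}_t$ as a map into $C_b(\mathcal{J}(f),\mathbb{C})$.
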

\begin{proof}
    Due to the fact that $|w| \geq T>0$ for all $w \in \mathcal{J}(f)$ and lemma \ref{estimate for transfer operator}, there exists $c>0$ such that,
    \begin{align*}
        | \mathcal{L}_t\varphi(w)|\leq c^t|w|^{-(1+\frac{1}{M}-\tau)t}\|\varphi\|\sum_{f(z)=w}\dfrac{1}{|z|^{(\tau-1)t}} \leq c^tT^{-(1+\frac{1}{M}-\tau)t}\|\varphi\|M_{(\tau-1)t}
    \end{align*}
    Thus, we obtain
    $$ \|\mathcal{L}_t\| \leq (cT^{-(1+\frac{1}{M}-\tau)})^tM_{(\tau-1)t}. $$
    \end{proof}
\noindent    Following the above proposition we assume $ 1<\tau<1+\frac{1}{M}$ and $t> \frac{\rho}{\tau -1}$ from now on, unless stated otherwise.
 
  \noindent  For each $n \in \mathbb{N}$, the $n^{th}$ iterate of the transfer operator takes the form,
    $$\mathcal{L}_t^n: C_b(\mathcal{J}(f),\mathbb{C}) \to C_b(\mathcal{J}(f),\mathbb{C})$$
\begin{equation}
    \mathcal{L}_t^n \varphi(w)=\sum_{f^n(z)=w}\exp(S_n\Phi_t(z))\varphi(z)=\sum_{f^n(z)=w}|(f^n)'(z)|_{\tau}^{-t}\varphi(z)=\sum_{f^n(z)=w}\big|(f_z^{-n})'(w)\big|_{\tau}^t\varphi(z),
\end{equation}
where 
\begin{align*}
    S_n\Phi_t(z) & =\sum_{i=0}^{n-1}\Phi_t(f^{i}(z))=\Phi_t(z)+\Phi_t(f(z))+...+\Phi_t(f^{n-1}(z))\\
    & = -t\log|f'(z)|_{\tau}-t\log|f'(f(z))|_{\tau}-t\log|f'(f^2(z))|_{\tau}-...-t\log|f'(f^{n-1}(z))|_{\tau}\\
    & = -t\log\left| (f^n)'(z) \right|_{\tau}=t\log\left| (f_z^{-n})'(f^n(z)) \right|_{\tau}
\end{align*}
is the ergodic sum of $\Phi_t$. Next, we show a series of lemma.

\begin{lemma}\label{Koebe f'}
    There exists $K>1$ such that for every $n \in \mathbb{N}$, every $w\in \mathcal{J}(f)$, every $w'$ with $|w'-w| < \delta$ and every $z \in f^{-n}(\{ w\})$, we have
   $$\left|\left|(f^{-n}_z)'(w)\right| - \left|(f^{-n}_z)'(w')\right|\right|\leq K\left|(f^{-n}_z)'(w)\right||w-w'|.$$
\end{lemma}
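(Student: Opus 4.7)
The plan is to reduce the statement to Koebe's distortion theorem applied to the univalent branches $f_z^{-n}$. By the hyperbolicity assumption and the preceding discussion in the paper, for every $w \in \mathcal{J}(f)$ and every $n \in \mathbb{N}$ the inverse branch $g := f_z^{-n}$ is defined, analytic, and injective on the entire disk $D(w,2\delta)$. Since $|w'-w| < \delta$, both $w$ and $w'$ lie in the sub-disk $D(w,\delta)$, which is compactly contained in the domain of $g$, so the full strength of distortion theory for univalent functions is available.

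First I would normalize: set
$$\phi(\zeta) := \frac{g(w+2\delta\zeta)-g(w)}{2\delta\, g'(w)}, \qquad \zeta \in D(0,1),$$
so that $\phi$ is univalent on the unit disk with $\phi(0)=0$ and $\phi'(0)=1$. The classical Koebe distortion theorem then bounds $|\phi''(\zeta)|$ uniformly on $\overline{D(0,1/2)}$ by some absolute constant $C_0$. Translating back via $g''(w+2\delta\zeta) = g'(w)\phi''(\zeta)/(2\delta)$, this produces a constant $C>0$, independent of $n$, $z$, and $w$, such that
$$|g''(w'')| \leq \frac{C}{\delta}\,|g'(w)| \qquad \text{for every } w'' \in D(w,\delta).$$

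Finally, using the reverse triangle inequality together with the fundamental theorem of calculus along the straight segment from $w$ to $w'$,
$$\bigl||g'(w)|-|g'(w')|\bigr| \leq |g'(w)-g'(w')| = \left|\int_0^1 g''\bigl(w+t(w'-w)\bigr)(w'-w)\,dt\right|,$$
and the pointwise bound on $|g''|$ from the previous step yields the desired inequality with $K := C/\delta$.

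I do not expect a substantive obstacle here: the statement is essentially a standard Koebe-type distortion estimate, and the hyperbolicity assumption has already done the real work by guaranteeing that $f_z^{-n}$ extends univalently to a disk of radius $2\delta$ that is twice the radius we need. The only point to watch is that the constant $K$ is genuinely uniform in $n$ and the branch $z$; this is automatic, because the Koebe constants depend only on the ratio $\delta/(2\delta) = 1/2$ of the two disk radii, which is fixed.
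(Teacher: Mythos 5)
Your proof is correct and takes essentially the same route as the paper, which simply cites Koebe's distortion theorem and leaves the details to the reader. You have supplied the standard expansion: normalize $g=f_z^{-n}$ to a Schlicht function on the unit disk, use the uniform bound on the second derivative over $\overline{D(0,1/2)}$ (a standard consequence of Koebe distortion theory / compactness of the class $S$), and integrate along the segment from $w$ to $w'$; the uniformity in $n$ and the branch $z$ comes for free because the ratio of the radii is the fixed number $1/2$.
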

\begin{proof}
    This follows from Koebe's distortion theorem \citep[p. 287]{MeromorphicDynamics}.
\end{proof}
\begin{lemma}\label{Koebe f}
    There exists $K>1$ such that for every $n \in \mathbb{N}$, every $w_1,w_2 \in \mathcal{J}(f)$ with $|w_1-w_2| < \delta$ and every $z \in f^{-n}(\{ w_1\})$, we have
    $$\left|(f_z^{-n})(w_1)-(f_z^{-n})(w_2)\right|\leq K|(f_z^{-n})(w_1)||w_1-w_2|.$$
\end{lemma}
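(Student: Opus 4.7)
The plan is to combine a Koebe-type distortion estimate for the inverse branch $f_z^{-n}$ with the Rippon--Stallard expansion inequality (\ref{Rippon-Stallard-modified}), thereby upgrading a derivative bound on the right-hand side into the value bound $|f_z^{-n}(w_1)|$ that actually appears in the statement.

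First, since $|w_1-w_2|<\delta$, the point $w_2$ lies inside $D(w_1,2\delta)$, the disc on which the branch $g:=f_z^{-n}$ is defined and univalent (see the discussion surrounding (\ref{delta})). Applying the Koebe distortion theorem to $g$ on $D(w_1,2\delta)$ yields a universal constant $K_0>1$, independent of $n$, $w_1$ and $z$, such that
\begin{equation*}
    |g(w_1)-g(w_2)|\;\leq\;\max_{\zeta\in[w_1,w_2]}|g'(\zeta)|\,|w_1-w_2|\;\leq\;K_0\,|g'(w_1)|\,|w_1-w_2|,
\end{equation*}
exactly as in the preceding Lemma \ref{Koebe f'}.

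Second, I would invoke the Rippon--Stallard estimate (\ref{Rippon-Stallard-modified}). Taking reciprocals, and using $|w_1|\geq T$ from (\ref{T}) together with $K_1^n\geq K_1$ for all $n\geq 1$,
\begin{equation*}
    |g'(w_1)|\;=\;\frac{1}{|(f^n)'(z)|}\;\leq\;\frac{|z|}{c_1K_1^n|w_1|}\;\leq\;\frac{|z|}{c_1K_1T}.
\end{equation*}
Setting $K:=K_0/(c_1K_1T)$ and noting $g(w_1)=z=f_z^{-n}(w_1)$, the two displays combine to produce $|f_z^{-n}(w_1)-f_z^{-n}(w_2)|\leq K\,|f_z^{-n}(w_1)|\,|w_1-w_2|$, which is the claim.

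I do not anticipate any real obstacle here; the argument is routine modulo the two named ingredients. The only subtlety is ensuring that the constant $K$ is uniform in $n$, $w_1$, and $z$, but the Koebe bound supplies this universally on $D(w_1,2\delta)$, while the Rippon--Stallard estimate supplies it through the hyperbolicity hypothesis (\ref{disjoint hyp}).
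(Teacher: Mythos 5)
Your proof is correct and follows essentially the same two-step route as the paper: first bound $|f_z^{-n}(w_1)-f_z^{-n}(w_2)|$ by $(\text{const})\cdot|(f_z^{-n})'(w_1)|\,|w_1-w_2|$ via the mean value theorem plus Koebe distortion (the paper cites Lemma~\ref{Koebe f'} for the distortion constant, you invoke Koebe directly, but it is the same ingredient), and then apply the Rippon--Stallard estimate~(\ref{Rippon-Stallard-modified}) together with $|w_1|\geq T$ to trade the derivative $|(f_z^{-n})'(w_1)|$ for the value $|(f_z^{-n})(w_1)|$. The only cosmetic difference is that you absorb $K_1^{-n}\leq K_1^{-1}$ into the constant immediately, while the paper keeps $K_1^{-n}$ visible in its final display.
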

\begin{proof}
    We use the mean value theorem and lemma \ref{Koebe f'} to find,
    $$\left|(f_z^{-n})(w_1)-(f_z^{-n})(w_2)\right|\leq (1+K\delta)|(f_z^{-n})'(w_1)||w_1-w_2|.$$
    Then Rippon-Stallard expansion estimate (\ref{Rippon-Stallard-modified}) yields,
    \begin{align*}
        \left|(f_z^{-n})(w_1)-(f_z^{-n})(w_2)\right|&\leq (1+K\delta)c_1^{-1}K_1^{-n}\dfrac{|(f_z^{-n})(w_1)|}{|w_1|} |w_1-w_2|\\
        & \leq (1+K\delta)c_1^{-1}K_1^{-n} T^{-1}|(f_z^{-n})(w_1)||w_1-w_2|.
    \end{align*}
\end{proof}
    \begin{lemma}\label{Ionescu S_n}
        There exists $K>1$ such that for every $n \in \mathbb{N}$, every $w_1,w_2 \in \mathcal{J}(f)$ with $|w_1-w_2|<\delta$ and  
        every $z \in f^{-n}(\{w_1 \})$, we have
        \begin{align*}
            \big| S_n\Phi_t\left(f^{-n}_z(w_1)\right) -S_n\Phi_t\left(f^{-n}_z(w_2)\right) \big| &= \left|t\log\left|(f_z^{-n})'(w_1)\right|_{\tau} - t\log\left|(f_z^{-n})'(w_2)\right|_{\tau}\right|\\
            & \leq tK|w_1-w_2|.
        \end{align*}
    \end{lemma}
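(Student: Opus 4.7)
The plan is to unfold the $\tau$-norm derivative via the chain-rule identity
\[
|(f^n)'(\zeta)|_\tau \;=\; |(f^n)'(\zeta)|\,\frac{|\zeta|^\tau}{|f^n(\zeta)|^\tau},
\]
which follows telescopically from the definition (\ref{metric derivative}). Inverting at $\zeta=f_z^{-n}(w)$ yields the decomposition
\[
\log|(f_z^{-n})'(w)|_\tau \;=\; \log|(f_z^{-n})'(w)| \;+\; \tau\log|w| \;-\; \tau\log|f_z^{-n}(w)|,
\]
so the quantity I need to bound splits into three differences evaluated at $w_1$ and $w_2$. I would bound each separately by a constant times $|w_1-w_2|$, uniformly in $n$, and then take $K$ to be the sum (or maximum) of the three constants.

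The first piece, $|\log|(f_z^{-n})'(w_1)| - \log|(f_z^{-n})'(w_2)||$, is handled by lemma \ref{Koebe f'}: after dividing through by $|(f_z^{-n})'(w_1)|$, that lemma gives $\bigl||(f_z^{-n})'(w_2)|/|(f_z^{-n})'(w_1)|-1\bigr|\leq K|w_1-w_2|\leq K\delta$, and on any neighborhood of $1$ the function $\log$ is Lipschitz, producing the desired bound. The second piece is immediate from the mean value theorem together with the lower bound $|w_i|\geq T$ supplied by (\ref{T}), giving $|\log|w_1|-\log|w_2||\leq T^{-1}|w_1-w_2|$.

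The third piece, involving $\log|f_z^{-n}(w_i)|$, is the delicate one because $|f_z^{-n}(w_i)|$ can a priori be arbitrarily large, so a naive mean-value estimate would be catastrophic. The key point is that lemma \ref{Koebe f} bounds $|f_z^{-n}(w_1)-f_z^{-n}(w_2)|$ by a constant times $K_1^{-n}\,|f_z^{-n}(w_1)|\,|w_1-w_2|$; the factor $|f_z^{-n}(w_1)|$ exactly cancels against the denominator from the mean-value theorem once one observes that $|f_z^{-n}(w_2)|$ is comparable to $|f_z^{-n}(w_1)|$ (again via lemma \ref{Koebe f}, possibly after shrinking $\delta$). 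Hence this difference is bounded by a constant multiple of $K_1^{-n}|w_1-w_2|\leq |w_1-w_2|$, uniformly in $n$.

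Summing the three bounds and multiplying by $t$ yields the claimed inequality. The main obstacle is the uniformity of the third term in $n$, and the Rippon--Stallard expansion built into lemma \ref{Koebe f} via the factor $K_1^{-n}$ is precisely what delivers it; without that exponential contraction the contributions from deep preimages would be uncontrolled.
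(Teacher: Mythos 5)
Your proof is correct and follows essentially the same route as the paper: both decompose $\log|(f_z^{-n})'(w)|_\tau$ into the same three terms and bound each via lemmas \ref{Koebe f'} and \ref{Koebe f} together with the elementary estimate that $\log$ is Lipschitz near $1$ (the paper phrases this as $\log(1+x)\leq x$). Your observation that the factor $|f_z^{-n}(w_1)|$ from lemma \ref{Koebe f} cancels against the denominator in the third term is exactly the mechanism the paper exploits.
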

    \begin{proof}
    We use lemmas \ref{Koebe f'} and \ref{Koebe f} with the corresponding constants $K_2,K_3>1$ and the fact that $\log(x+1)\leq x$ for $x>-1$ to write,
    \begin{align*}
      &  \left|t\log\left|(f_z^{-n})'(w_1)\right|_{\tau} - t\log\left|(f_z^{-n})'(w_2)\right|_{\tau}\right| \\
       = & \left|t\log \left| \dfrac{(f_z^{-n})'(w_1)}{(f_z^{-n})'(w_2)} \right|+t\tau\log\left|\dfrac{w_1}{w_2} \right|-t\tau\log \left| \dfrac{(f_z^{-n})(w_1)}{(f_z^{-n})(w_2)} \right|\right| \\
       \leq & \;t\log \left(1+K_2|w_1-w_2| \right) +t\tau \log \left(1+\dfrac{|w_1-w_2|}{T} \right) +t\tau \log \left(1+K_3|w_1-w_2| \right)\\
       \leq &\; tK_2|w_1-w_2| +t\tau T^{-1}|w_1-w_2|+t\tau K_3|w_1-w_2| \\
       \leq &\; t(K_2+\tau T^{-1}+\tau K_3)|w_1-w_2|.
    \end{align*}
    \end{proof}
    \begin{lemma}\label{exp Ionescu S_n}
        There exists $K>1$ such that for every $n \in \mathbb{N}$, every $w_1,w_2 \in \mathcal{J}(f)$ with $|w_1-w_2|<\delta$ and  
        every $z \in f^{-n}(\{w_1 \})$, we have
        \begin{align*}
            \left| \exp\left(S_n\Phi_t\left(f^{-n}_z(w_1)\right)\right) -\exp\left(S_n\Phi_t\left(f^{-n}_z(w_2)\right)\right) \right| &= \left|\left|(f_z^{-n})'(w_1)\right|_{\tau}^t - \left|(f_z^{-n})'(w_2)\right|_{\tau}^t\right|\\
            & \leq te^{tK}K\left|(f_z^{-n})'(w_1)\right|_{\tau}^t|w_1-w_2|.
        \end{align*}
    \end{lemma}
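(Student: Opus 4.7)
The plan is to reduce this statement to the already-established Lemma \ref{Ionescu S_n} via an elementary estimate for the exponential function. Set
$$a := S_n\Phi_t\!\left(f_z^{-n}(w_1)\right) = t\log\left|(f_z^{-n})'(w_1)\right|_\tau, \qquad b := S_n\Phi_t\!\left(f_z^{-n}(w_2)\right) = t\log\left|(f_z^{-n})'(w_2)\right|_\tau,$$
so the quantity to be bounded is simply $|e^a - e^b|$, and the equality of the two expressions in the statement is just the identity $S_n\Phi_t = t\log|(f_z^{-n})'|_\tau$ recorded after the definition of the transfer operator.

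First, I would apply the mean value theorem to $x \mapsto e^x$ on the interval with endpoints $a,b$, which gives $|e^a - e^b| \leq e^{\max(a,b)}|a-b|$. Writing $\max(a,b) \leq a + |a-b|$, this rearranges to
$$\left|e^a - e^b\right| \leq e^a\, e^{|a-b|}\,|a-b|.$$
Next, I would invoke Lemma \ref{Ionescu S_n} (call its constant $K'$) to get $|a-b| \leq tK'|w_1-w_2|$, and since the hypothesis $|w_1-w_2| < \delta$ is in force we have the uniform bound $e^{|a-b|} \leq e^{tK'\delta}$. Combining,
$$\left|e^a - e^b\right| \leq tK'\,e^{tK'\delta}\,e^a\,|w_1 - w_2|,$$
and recalling that $e^a = |(f_z^{-n})'(w_1)|_\tau^t$ yields the desired inequality.

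To match the exact form $te^{tK}K$ stated in the lemma, I would set $K := K'\max(1,\delta)$, so that $e^{tK'\delta} \leq e^{tK}$ and also $K' \leq K$; after this cosmetic renaming the constants line up.

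There is essentially no serious obstacle here: the proof is a direct consequence of Lemma \ref{Ionescu S_n} combined with the pointwise estimate $|e^a-e^b| \leq e^{\max(a,b)}|a-b|$. The only minor bookkeeping is ensuring the constant $K$ in this lemma absorbs the factor $\delta$ coming from the crude bound $|a-b| \leq tK'\delta$, which is handled by the rescaling above.
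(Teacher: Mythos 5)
Your proof is correct and is essentially the paper's own argument: the paper likewise reduces the claim to Lemma \ref{Ionescu S_n} via the elementary estimate $|e^x-e^y|\leq e^{|x-y|}e^x|x-y|$, which is precisely what your mean-value-theorem computation establishes. Your extra step of absorbing the $\delta$ factor into $K$ is sound bookkeeping that the paper leaves implicit.
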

    \begin{proof}
        It is enough to use lemma \ref{Ionescu S_n} along with the fact $|e^x-e^y|\leq e^{|x-y|}e^x|x-y|$.        
    \end{proof}
\begin{lemma}\label{distortion of transfer operator 0}
 There exists $K>1$ such that for every $n \in \mathbb{N}$ and every $w_1,w_2 \in \mathcal{J}(f)$ with $|w_1-w_2|<\delta$ we have
    $$\left| \mathcal{L}_t^n\mathbb{1}(w_1) - \mathcal{L}_t^n\mathbb{1}(w_2) \right| \leq te^{tK}K \mathcal{L}_t^n\mathbb{1}(w_1)|w_1-w_2| $$
\end{lemma}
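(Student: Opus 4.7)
The plan is a direct triangle-inequality estimate after re-indexing the sum defining $\mathcal{L}_t^n \mathbb{1}(w_2)$ using the inverse branches anchored at $w_1$. Since $|w_1 - w_2| < \delta < 2\delta$, the point $w_2$ lies in $D(w_1, 2\delta)$, so for every $z \in f^{-n}(\{w_1\})$ the inverse branch $f_z^{-n}$ is defined and analytic on all of $D(w_1, 2\delta)$, and $f_z^{-n}(w_2)$ is a well-defined preimage of $w_2$ under $f^n$. By the covering property of $f^n : f^{-n}(D(w_1,2\delta)) \to D(w_1,2\delta)$ described in the preliminaries, the map $z \mapsto f_z^{-n}(w_2)$ is a bijection between $f^{-n}(\{w_1\})$ and $f^{-n}(\{w_2\})$.

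Using this bijection to rewrite
\[
\mathcal{L}_t^n \mathbb{1}(w_2) \;=\; \sum_{z \in f^{-n}(\{w_1\})} \bigl|(f_z^{-n})'(w_2)\bigr|_\tau^{t},
\]
I would then apply the triangle inequality termwise,
\[
\bigl| \mathcal{L}_t^n\mathbb{1}(w_1) - \mathcal{L}_t^n\mathbb{1}(w_2) \bigr| \;\leq\; \sum_{z \in f^{-n}(\{w_1\})} \Bigl| \bigl|(f_z^{-n})'(w_1)\bigr|_\tau^{t} - \bigl|(f_z^{-n})'(w_2)\bigr|_\tau^{t} \Bigr|,
\]
and feed each term into Lemma \ref{exp Ionescu S_n}, which yields the bound $t e^{tK} K |(f_z^{-n})'(w_1)|_\tau^{t} |w_1 - w_2|$ per term. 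Pulling the common factor $t e^{tK} K |w_1 - w_2|$ out of the sum recollapses what remains into $\mathcal{L}_t^n \mathbb{1}(w_1)$, giving the claimed inequality.

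There is no real obstacle here beyond bookkeeping: the only place care is needed is the re-indexing step, where one must justify that $f_z^{-n}$ extends to $D(w_1, 2\delta)$ and that distinct branches at $w_1$ continue to distinct preimages of $w_2$. Both follow from the covering-map structure recorded right after the definition of $\delta$, together with Remark \ref{f_z=f_z'} to ensure consistency of the labeling. Everything else is routine application of the previously established Lemma \ref{exp Ionescu S_n}.
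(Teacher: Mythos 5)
Your proof is correct and follows essentially the same route as the paper: re-index the sum over $f^{-n}(\{w_2\})$ via the inverse branches based at $w_1$ (using the covering-map structure and Remark \ref{f_z=f_z'} for the bijection), apply the triangle inequality termwise, and invoke Lemma \ref{exp Ionescu S_n} before recollapsing to $\mathcal{L}_t^n\mathbb{1}(w_1)$. The only difference is that you spell out the re-indexing bijection more explicitly, where the paper simply cites Remark \ref{f_z=f_z'}.
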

\begin{proof}
With regards to remark \ref{f_z=f_z'} and lemma \ref{exp Ionescu S_n} we can write,
       \begin{align*}
           \big| \mathcal{L}_t^n\mathbb{1}(w_1) - \mathcal{L}_t^n\mathbb{1}(w_2) \big| &= \left| \sum_{f^n(z)=w_1} \left|(f_z^{-n})'(w_1)\right|_{\tau}^t - \sum_{f^n(z)=w_2} \left|(f_z^{-n})'(w_2)\right|_{\tau}^t\right| \\
           & = \left| \sum_{f^n(z)=w_1} \left(\left|(f_z^{-n})'(w_1)\right|_{\tau}^t - \left|(f_z^{-n})'(w_2)\right|_{\tau}^t\right) \right| \\
           & \leq \sum_{f^n(z)=w_1} \left| \left|(f_z^{-n})'(w_1)\right|_{\tau}^t - \left|(f_z^{-n})'(w_2)\right|_{\tau}^t \right| \\
           & \leq \sum_{f^n(z)=w_1} te^{tK}K\left|(f_z^{-n})'(w_1)\right|_{\tau}^t|w_1-w_2| \\
           & =  te^{tK}K\mathcal{L}_t^n\mathbb{1}(w_1)|w_1-w_2|.
        \end{align*}
\end{proof}
\noindent It is clear from this lemma that for every $n \in \mathbb{N}$ and every $w_1, w_2 \in \mathcal{J}(f)$ with $|w_1-w_2|< \delta$,
$$\mathcal{L}_t^n\mathbb{1}(w_2) \leq (1+te^{tK}K\delta)\mathcal{L}_t^n\mathbb{1}(w_1).$$
\noindent This shows the map,
$$P_t: \;\mathcal{J}(f) \rightarrow \mathbb{R}$$
$$w \mapsto \limsup_{n \to \infty}\dfrac{1}{n}\log \mathcal{L}_t^n\mathbb{1}(w),$$
is locally constant. In fact, this map is constant by the following lemma.

\begin{lemma}\label{distortion of transfer operator}
    There exist $c>0$ such that for every $R>0$, there exist $K_R>1$ such that for every $n\in \mathbb{N}$ and every $w_1, w_2 \in \mathcal{J}(f)\cap D(0, R)$, we have
   $$\mathcal{L}_t^n\mathbb{1}(w_2) \leq c K_R^t\mathcal{L}_t^n\mathbb{1}(w_1).$$
\end{lemma}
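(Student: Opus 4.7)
The plan is to boot-strap the infinitesimal distortion estimate into a global one on $\mathcal{J}(f)\cap D(0,R)$ by a telescoping chain argument. The first step is to recast the local estimate in a clean multiplicative form whose iteration produces a factor of shape $K_R^t$: the additive bound of Lemma \ref{distortion of transfer operator 0} carries an $e^{tK}$-type constant, but exponentiating Lemma \ref{Ionescu S_n} gives, for every $w_1\in\mathcal{J}(f)$, every $w_2\in\mathcal{J}(f)\cap D(w_1,\delta)$, and every $z\in f^{-n}(\{w_1\})$,
$$\left|(f_z^{-n})'(w_2)\right|_\tau^t \leq e^{tK|w_1-w_2|}\left|(f_z^{-n})'(w_1)\right|_\tau^t.$$
Using the bijection $z\mapsto f_z^{-n}(w_2)$ between $f^{-n}(\{w_1\})$ and $f^{-n}(\{w_2\})$ supplied by the inverse branches on $D(w_1,2\delta)$ and summing over $z$ yields the local multiplicative distortion
$$\mathcal{L}_t^n\mathbb{1}(w_2) \leq e^{tK|w_1-w_2|}\,\mathcal{L}_t^n\mathbb{1}(w_1).$$

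Next, I would exploit compactness of $\mathcal{J}(f)\cap \overline{D(0,R)}$ to extract a finite cover by balls $\{D(v_i,\delta/4)\}_{i=1}^{N_R}$ with $v_i\in\mathcal{J}(f)\cap \overline{D(0,R)}$. For any $w_1,w_2\in\mathcal{J}(f)\cap D(0,R)$, build a finite $\delta$-chain $w_1=u_0,u_1,\ldots,u_N=w_2$ in $\mathcal{J}(f)\cap\overline{D(0,R)}$ with $|u_k-u_{k+1}|<\delta$ for all $k$ and $N$ bounded by a quantity depending only on $N_R$, by hopping between cover centers whose balls overlap. Telescoping the local multiplicative estimate along the chain gives
$$\mathcal{L}_t^n\mathbb{1}(w_2) \leq e^{tK\sum_{k=0}^{N-1}|u_k-u_{k+1}|}\mathcal{L}_t^n\mathbb{1}(w_1) \leq e^{tKN\delta}\mathcal{L}_t^n\mathbb{1}(w_1),$$
so the lemma holds with $c=1$ and $K_R:=e^{KN\delta}$, both independent of $t$ and $n$.

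The main obstacle is constructing the chain in step two, since $\mathcal{J}(f)\cap \overline{D(0,R)}$ is not assumed connected and the nerve graph of the cover may split into several components. A safe workaround is to observe that $\mathcal{L}_t^n\mathbb{1}$ extends real-analytically to the $2\delta$-neighborhood $V:=\bigcup_{w_0\in\mathcal{J}(f)}D(w_0,2\delta)$: on each disk $D(w_0,2\delta)$ the expression $w\mapsto \sum_{z\in f^{-n}(\{w_0\})}|(f_z^{-n})'(w)|_\tau^t$ is well-defined, and two such local definitions agree on the overlap of their disks because, for every $w$ in the overlap, the inverse branches from either disk enumerate exactly $f^{-n}(\{w\})$. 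The multiplicative local estimate extends with the same constant $K$ to $V$ since the proof of Lemma \ref{Ionescu S_n} only required $w_1\in\mathcal{J}(f)$, $w_2\in D(w_1,\delta)$, and bounded distortion of the inverse branches. One can then replace the chain in $\mathcal{J}(f)$ by a straight-line chain in $V$ of length at most $2R$, subdivided into steps of length $<\delta$, which yields $K_R=e^{2KR}$ and bypasses the topological structure of the Julia set entirely.
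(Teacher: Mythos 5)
Your Step 1 is correct: exponentiating Lemma \ref{Ionescu S_n} and using the bijection between $f^{-n}(\{w_1\})$ and $f^{-n}(\{w_2\})$ furnished by the inverse branches on $D(w_1,2\delta)$ does give the clean multiplicative local estimate $\mathcal{L}_t^n\mathbb{1}(w_2)\leq e^{tK|w_1-w_2|}\mathcal{L}_t^n\mathbb{1}(w_1)$ for $w_1\in\mathcal{J}(f)$, $w_2\in\mathcal{J}(f)\cap D(w_1,\delta)$. But the chaining that follows has a genuine gap that the proposed workaround does not close. You correctly identify that a $\delta$-chain inside $\mathcal{J}(f)\cap\overline{D(0,R)}$ need not exist: the Julia set of a hyperbolic transcendental meromorphic function is in general badly disconnected, and the nerve graph of your finite cover can split. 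Your fix is to extend $\mathcal{L}_t^n\mathbb{1}$ to $V=\bigcup_{w_0\in\mathcal{J}(f)}D(w_0,2\delta)$ and then chain along the straight segment from $w_1$ to $w_2$, ``bypassing the topological structure of the Julia set.'' This is exactly where the argument fails: there is no reason the segment $[w_1,w_2]$ stays inside $V$. If $\mathcal{J}(f)\cap\overline{D(0,R)}$ has two pieces separated by a gap of Euclidean width greater than $4\delta$ (and no Julia points elsewhere bridging it), the segment leaves $V$ and the chain cannot be formed. There is also a secondary issue: even for two consecutive chain points $u_k,u_{k+1}\in V$ that are both off $\mathcal{J}(f)$, the distortion estimate of Lemma \ref{Ionescu S_n} requires an anchor $w_0\in\mathcal{J}(f)$ with \emph{both} points well inside $D(w_0,2\delta)$ to keep the Koebe constant uniform, which forces you to shrink $V$ and makes the connectivity problem worse, not better.

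The paper avoids geometric connectivity entirely and connects $w_1$ and $w_2$ \emph{dynamically}, via the blow-up (topological exactness) property of the Julia set: for every $R>0$ there is $N=N(R)$ so that $\mathcal{J}(f)\cap D(0,R)\subseteq f^N\bigl(D(w_2,\delta)\bigr)$ for all $w_2\in\mathcal{J}(f)\cap D(0,R)$. One then picks $w_3\in D(w_2,\delta)$ with $f^N(w_3)=w_1$, applies the \emph{local} distortion (your Step 1, or Lemma \ref{distortion of transfer operator 0}) only on the single disk $D(w_2,\delta)$ to pass from $w_2$ to $w_3$, and finally uses the cocycle identity $|(f^{n+N})'(z)|_\tau=|(f^n)'(z)|_\tau\,|(f^N)'(f^n z)|_\tau$ together with a uniform bound $|(f^N)'|_\tau^t\leq K_R^t$ on $\mathcal{J}(f)\cap D(0,R+\delta)$ to dominate $\mathcal{L}_t^n\mathbb{1}(w_3)$ by $K_R^t\,\mathcal{L}_t^{n+N}\mathbb{1}(w_1)$. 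That single expansion step replaces your entire chain, is independent of the geometry of $\mathcal{J}(f)\cap D(0,R)$, and is the ingredient your argument is missing.
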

\begin{proof}
    First, we mention the following blow-up property of the Julia set shown in \citep[p. 18]{Urbanski-Mayer}. Given $R>0$, there exists $N \in \mathbb{N}$ such that for all $w_2 \in \mathcal{J}(f)\cap D(0, R)$, we have $\mathcal{J}(f)\cap D(0, R) \subseteq f^N\left( D(w_2,\delta) \right)$. Since all Picard's exceptional points are asymptotic values and therefore they lie in the Fatou set of $f$, one can find $w_3 \in D(w_2, \delta)$ such that $w_1=f^N(w_3)$. Since $\mathcal{J}(f)$ is bounded away from the origin and $f^N$ is analytic on $\mathcal{J}(f)$, there exists $K_R$ such that $|(f^N)'(w_3)|^t_{\tau}\leq K_R^t$ for all $w_3 \in \mathcal{J}(f)\cap D(0, R+\delta)$. Hence, we use lemma \ref{distortion of transfer operator} to find,
    \begin{align*}
        \mathcal{L}_t^n\mathbb{1}(w_2) &\leq (1+te^{tK}K\delta)\mathcal{L}_t^n\mathbb{1}(w_3) \\
        & = (1+te^{tK}K\delta) \sum_{f^n(z)=w_3} \left|(f^{n})'(z)\right|_{\tau}^{-t}\\
        & = (1+te^{tK}K\delta) \sum_{f^{n}(z)=w_3} \left|(f^{n+N})'(z)\right|_{\tau}^{-t}\left|(f^{N})'(f^n(z))\right|_{\tau}^{t} \\
        & \leq (1+te^{tK}K\delta) \sum_{f^{n}(z)=w_3} \left|(f^{n+N})'(z)\right|_{\tau}^{-t}K_R^t\\
        & \leq (1+te^{tK}K\delta)K_R^t \sum_{f^{n+N}(z)=w_1} \left|(f^{n+N})'(z)\right|_{\tau}^{-t}\\
        & = (1+te^{tK}K\delta)K_R^t \mathcal{L}_t^n\mathbb{1}(w_1)
    \end{align*}
\end{proof}
\section{Construction of Conformal Measure}
\begin{definition}\label{pressure}
    The pressure of the potential $\Phi_t$ is defined by 
    $$P_t= \limsup_{n \to \infty}\dfrac{1}{n}\log \mathcal{L}_t^n\mathbb{1}(w), \hspace{1cm} w \in \mathcal{J}(f).$$
\end{definition}

\noindent We emphasize that for every $n \in \mathbb{N}$, the adjoint operator $\left(\mathcal{L}_t^n \right)^*$ takes every finite measure on $\mathcal{J}(f)$ to another finite measure. In fact, for every finite (complex) measure $m$, every $g \in C_b(\mathcal{J}(f),\mathbb{C})$ and every Borel set $A\subseteq \mathcal{J}(f)$, we have the following functional equations

$$\left[\left(\mathcal{L}_t^n \right)^*m \right](g)=\int \mathcal{L}_t^n g dm, \hspace{1.5cm} \left[\left(\mathcal{L}_t^n \right)^*m \right](A)=\int \mathcal{L}_t^n \mathbb{1}_A dm.$$
\begin{proposition}\label{equivalence of eigenmeasure and conformal measure}
     Given a constant $c$ and a $ce^{-\Phi_t}-$conformal measure $m$ on $\mathcal{J}(f)$. The following conditions are equivalent,
     
    (a) $m(f^n(A))=c^n\int_A\exp(-S_n\Phi_t)\text{d}m$, for every $n\in \mathbb{N}$ and every Borel $A\subseteq \mathcal{J}(f)$ where $f^n$ is injective on $A$.

    (b) $m(f(A))=c\int_A\exp(-\Phi_t)\text{d}m$, for every Borel $A\subseteq \mathcal{J}(f)$ where $f$ is injective on $A$.

    (c) $\left(\mathcal{L}_t \right)^*m=cm$

    (d) $\left(\mathcal{L}_t^n \right)^*m=c^nm$ for every $n\in \mathbb{N}$.
\end{proposition}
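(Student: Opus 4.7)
The four conditions naturally split into two pairs linked by a duality bridge: $(a)\Leftrightarrow(b)$ and $(c)\Leftrightarrow(d)$ merely relate a base case to its iterate, while $(b)\Leftrightarrow(c)$ captures the duality between the Jacobian formulation of conformality and the adjoint of the transfer operator. My plan is to prove the two pairs by induction on $n$ and the bridge by a countable inverse-branch decomposition.

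For $(a)\Leftrightarrow(b)$, the direction $(a)\Rightarrow(b)$ is immediate on setting $n=1$. For $(b)\Rightarrow(a)$ I induct on $n$. Given a Borel set $A$ on which $f^{n+1}$ is injective, both $f^n$ is injective on $A$ and $f$ is injective on $f^n(A)$, so applying $(b)$ to $f^n(A)$ gives $m(f^{n+1}(A)) = c\int_{f^n(A)} e^{-\Phi_t}\,dm$. The inductive hypothesis, extended from indicators to nonnegative Borel functions by monotone convergence, yields the change-of-variables identity $\int_{f^n(A)} g\,dm = c^n\int_A (g\circ f^n)e^{-S_n\Phi_t}\,dm$; applying it with $g = e^{-\Phi_t}$ and using the cocycle relation $S_{n+1}\Phi_t = S_n\Phi_t + \Phi_t\circ f^n$ produces $(a)$ at level $n+1$. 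For $(c)\Leftrightarrow(d)$, $(d)\Rightarrow(c)$ is $n=1$, and $(c)\Rightarrow(d)$ follows by iterating the adjoint relation $(\mathcal{L}_t^n)^* = (\mathcal{L}_t^*)^n$, so $(\mathcal{L}_t^*)^n m = c^n m$ by induction.

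The bridge $(b)\Leftrightarrow(c)$ is the heart of the argument. First I produce a countable Borel partition $\{A_i\}$ of $\mathcal{J}(f)$ on which $f$ is univalent; such a partition exists because $\mathcal{J}(f)$ is covered by countably many disks $D(w_k,\delta)$ and each preimage $f^{-1}(D(w_k,\delta))$ decomposes into a disjoint union of Borel sets on which $f$ is univalent, via the inverse-branch discussion preceding remark \ref{f_z=f_z'}. For $(b)\Rightarrow(c)$, take a Borel $B\subseteq\mathcal{J}(f)$ and unfold $\int\mathcal{L}_t\mathbb{1}_B\,dm$ by monotone convergence (justified by nonnegativity of the weights together with proposition \ref{boundedness of transfer operator}) as
\[
\sum_i \int_{f(A_i\cap B)} \bigl|f'\bigl(f_i^{-1}(w)\bigr)\bigr|_\tau^{-t}\,dm(w),
\]
where $f_i^{-1}$ denotes the inverse of $f$ on $A_i$. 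Applying the change-of-variables form of $(b)$ on each $A_i\cap B$ with test function $|f'\circ f_i^{-1}|_\tau^{-t}$ converts the integrand into the constant $c$, yielding $\sum_i c\,m(A_i\cap B) = c\,m(B)$, which is $(c)$. For $(c)\Rightarrow(b)$, given a Borel $A$ on which $f$ is injective, evaluate $(c)$ at the test function $g = \mathbb{1}_A\,|f'|_\tau^t$: since $f$ has at most one preimage of any $w$ inside $A$, we have $\mathcal{L}_t g(w) = \mathbb{1}_{f(A)}(w)$, so $m(f(A)) = \int\mathcal{L}_t g\,dm = c\int g\,dm = c\int_A|f'|_\tau^t\,dm = c\int_A e^{-\Phi_t}\,dm$, which is $(b)$.

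The main technical delicacy is the measurability of the partition $\{A_i\}$ and the interchange of the countable sum with the integral in the $(b)\Rightarrow(c)$ direction; both follow from the covering-map structure of $f^n \colon f^{-n}(D(w,2\delta))\to D(w,2\delta)$ recorded before remark \ref{f_z=f_z'}, and the nonnegativity of all integrands legitimizes the use of monotone convergence. Once these measure-theoretic steps are in place, the remaining manipulations reduce to the standard Jacobian bookkeeping and the cocycle identity for $S_n\Phi_t$.
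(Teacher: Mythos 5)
Your proof is correct, and it takes a somewhat different route from the paper's. The paper runs the cycle $(a)\Rightarrow(b)\Rightarrow(c)\Rightarrow(d)\Rightarrow(a)$: it treats $(a)\Rightarrow(b)$ and $(c)\Rightarrow(d)$ as obvious, outsources $(b)\Rightarrow(c)$ to a reference (Munday, lemma 13.6.13), and proves $(d)\Rightarrow(a)$ by a one-line functional-equation computation: writing $c^n\int_A e^{-S_n\Phi_t}\,dm = c^{-n}(\mathcal{L}_t^n)^*m(c^n e^{-S_n\Phi_t}\mathbb{1}_A)$ and unfolding $\mathcal{L}_t^n(e^{-S_n\Phi_t}\mathbb{1}_A)=\mathbb{1}_{f^n(A)}$. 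You instead organize the proof symmetrically into three biconditionals $(a)\Leftrightarrow(b)$, $(b)\Leftrightarrow(c)$, $(c)\Leftrightarrow(d)$, and crucially you give a self-contained proof of the bridge $(b)\Leftrightarrow(c)$ via a countable Borel partition into injectivity domains (for $\Rightarrow$) and the test function $g=\mathbb{1}_A|f'|_\tau^t$ exploiting $\mathcal{L}_t g=\mathbb{1}_{f(A)}$ (for $\Leftarrow$). The latter test-function trick is essentially the same mechanism the paper uses in $(d)\Rightarrow(a)$, just transposed to the $n=1$ level. Your route buys self-containedness where the paper leans on a citation; the paper's route is shorter because it proves only one nontrivial implication explicitly and lets the cycle do the rest. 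One small point worth flagging in your $(b)\Rightarrow(c)$ direction: the partition $\{A_i\}$ covers $f^{-1}(\mathcal{J}(f))$, which omits the poles of $f$; this is harmless because conformality forces $m(\{\text{pole}\})=0$ (otherwise $\int_{\{\text{pole}\}}e^{-\Phi_t}\,dm$ would be infinite while $m(f(\{\text{pole}\}))=m(\{\infty\})=0$), but it is worth a sentence of justification.
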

\begin{proof}
    It is enough to show $(a) \implies (b) \implies (c) \implies (d) \implies (a).$ All implications are obvious, except for two of them. The argument for $(b) \implies(c)$ up to some modifications is very similar to the argument presented in lemma 13.6.13 in \citep[p. 465]{Munday}. We skip repeating it here. For $(d) \implies (a)$ we observe that
    \begin{align*}
        c^n\int_A \exp(-S_n\Phi_t)dm_t=m_t\left(c^n\exp(-S_n\Phi_t) \mathbb{1}_A \right)&=c^{-n}\left(\mathcal{L}_t^n \right)^*m_t\left(c^n\exp(-S_n\Phi_t) \mathbb{1}_A \right)\\
        &=\left(\mathcal{L}_t^n \right)^*m_t\left(\exp(-S_n\Phi_t) \mathbb{1}_A \right)=\int \mathcal{L}_t^n \left( \exp(-S_n\Phi_t) \mathbb{1}_A \right)dm_t\\
        &=\int \sum_{f^n(z)=w} \left( \exp \left(S_n\Phi_t(z) \right)\exp \left(-S_n\Phi_t(z) \right)\mathbb{1}_A(z)\right)dm_t(w)\\
        &=\int \sum_{f^n(z)=w} \mathbb{1}_A(z)dm_t(w)=\int \mathbb{1}_{f^n(A)}dm_t=m_t(f^n(A)).
    \end{align*}    
    
\end{proof}

\noindent We show below that a conformal measure exists for each $f\in $ BK. We start by introducing the following series for a fixed $w_0 \in \mathcal{J}(f)$,
$$S_0=\sum_{n=1}^{\infty}e^{-ns}\mathcal{L}_t^n\mathbb{1}(w_0)=\sum_{n=1}^{\infty}\exp(\log \mathcal{L}_t^n\mathbb{1}(w_0) -ns).$$
Note that $S_0$ converges for $s>P_t$ and it diverges for $s<P_t$. When $s=P_t$, we have two cases. In either case, one can find a sequence of positive real numbers $\{b_n\}$ such that $\lim_{n\to \infty}\frac{b_{n}}{b_{n+ 1}}=1$ and the modified series
$$S=\sum_{n=1}^{\infty}b_ne^{-ns}\mathcal{L}_t^n\mathbb{1}(w_0)=\sum_{n=1}^{\infty}b_n\exp(\log \mathcal{L}_t^n\mathbb{1}(w_0) -ns),$$
converges when $s>P_t$ and it diverges when $s\leq P_t$ as shown in \citep[p. 356]{MeromorphicDynamics}. Additionally, it is not hard to see that $S$ is a decreasing function of $s$ on the interval $(P_t,\infty)$. 

\noindent Now for every $s>P_t$ we define a measure 
$$\nu_s=\dfrac{1}{S}\sum_{n=1}^{\infty}b_ne^{-ns}\left(\mathcal{L}_t^n \right)^*\delta_{w_0},$$
which turns out to be a probability measure using the above functional equations. We remind readers that $B(R)=\{z \; : \; |z|>R\}\cup \{\infty \}$ and we prove the following lemma.
\begin{lemma}\label{shriking transfer operator outside diks}
    There exist $r_t, c_t >0$ such that for every $w\in \mathcal{J}(f)$ and every $R>1$, we have $\mathcal{L}_t \mathbb{1}_{B}(w)\leq \frac{c_t}{R^{r_t}}$, where $B=B(R) \cap \mathcal{J}(f)$.
\end{lemma}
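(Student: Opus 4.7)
The plan is to expand $\mathcal{L}_t \mathbb{1}_B(w)$ using the definition of the transfer operator, apply the derivative estimate from Lemma \ref{estimate for tau derivative stronger} to each summand, and then exploit the restriction $|z|>R$ by shaving off a small piece of the exponent $(\tau-1)t$ and invoking Borel's theorem on the remaining piece.

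Concretely, I would first write
\[
\mathcal{L}_t \mathbb{1}_B(w) \;=\; \sum_{\substack{f(z)=w \\ |z|>R}} |f'(z)|_\tau^{-t}
\;\leq\; c^{t}\,|w|^{-(1+\frac{1}{M}-\tau)t}\sum_{\substack{f(z)=w \\ |z|>R}} \frac{1}{|z|^{(\tau-1)t}},
\]
invoking Lemma \ref{estimate for tau derivative stronger}. Since $1<\tau<1+\tfrac{1}{M}$, the exponent $(1+\tfrac{1}{M}-\tau)t$ is strictly positive, so the prefactor $|w|^{-(1+\frac{1}{M}-\tau)t}$ is bounded above by $T^{-(1+\frac{1}{M}-\tau)t}$ uniformly in $w\in\mathcal{J}(f)$, using the standing hypothesis (\ref{T}).

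The key move is choosing a small $\epsilon>0$ with $\epsilon<(\tau-1)t-\rho$, which is possible because $t>\frac{\rho}{\tau-1}$. Then $(\tau-1)t-\epsilon>\rho$, so Borel's theorem in the uniform form (\ref{Borel theorem}) supplies a constant $M_{(\tau-1)t-\epsilon}$ with
\[
\sum_{f(z)=w}\frac{1}{|z|^{(\tau-1)t-\epsilon}} \;\leq\; M_{(\tau-1)t-\epsilon} \qquad \text{for all } w\in\mathcal{J}(f).
\]
For the tail restricted to $\{|z|>R\}$ I factor
\[
\frac{1}{|z|^{(\tau-1)t}} \;=\; \frac{1}{|z|^{\epsilon}}\cdot\frac{1}{|z|^{(\tau-1)t-\epsilon}} \;\leq\; \frac{1}{R^{\epsilon}}\cdot\frac{1}{|z|^{(\tau-1)t-\epsilon}},
\]
and sum over $z$. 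Combining everything yields
\[
\mathcal{L}_t \mathbb{1}_B(w) \;\leq\; c^{t}\,T^{-(1+\frac{1}{M}-\tau)t}\,M_{(\tau-1)t-\epsilon}\cdot\frac{1}{R^{\epsilon}},
\]
so setting $r_t:=\epsilon$ and $c_t:=c^{t}T^{-(1+\frac{1}{M}-\tau)t}M_{(\tau-1)t-\epsilon}$ finishes the proof.

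The only subtlety, and the main thing to check carefully, is the choice of $\epsilon$: it must be strictly positive so that the factor $R^{-\epsilon}$ actually decays, and strictly less than $(\tau-1)t-\rho$ so that Borel's estimate (\ref{Borel theorem}) remains applicable to the exponent $(\tau-1)t-\epsilon$. No other part of the argument is delicate; everything else is just bookkeeping with the standing bounds $|w|\geq T$ and $|z|>R$.
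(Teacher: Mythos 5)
Your proof is correct and follows essentially the same route as the paper: apply Lemma \ref{estimate for tau derivative stronger}, bound the $|w|$-prefactor by $T^{-(1+\frac{1}{M}-\tau)t}$, split the exponent $(\tau-1)t$ into a piece that decays in $R$ and a remainder strictly above $\rho$ to which the uniform Borel estimate (\ref{Borel theorem}) applies. (If anything, your choice of $\epsilon < (\tau-1)t-\rho$ with $r_t=\epsilon$ states the conclusion more carefully than the paper's final line, which contains a small slip in passing from $R^{(\tau-1)t-\alpha}$ to $R^{(\tau-1)t-\rho}$.)
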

\begin{proof}
Due to the fact that $|w|\geq T>0$ for all $w \in \mathcal{J}(f)$ and lemma \ref{estimate for tau derivative stronger} we find,
    \begin{align*}
        \mathcal{L}_t \mathbb{1}_{B}(w) =\sum_{f(z)=w} \left|(f_z^{-1})'(w)\right|_{\tau}^t\mathbb{1}_{B}(z)
        &=\sum_{\substack{f(z)=w \\ |z|>R}} \left|(f_z^{-1})'(w)\right|_{\tau}^t \\
        & \leq \sum_{\substack{f(z)=w \\ |z|>R}} c^{t}\dfrac{|w|^{-(1+\frac{1}{M}-\tau)t}}{|z|^{(\tau-1)t}} \\
        & \leq c^t T^{-(1+\frac{1}{M}-\tau)t} \sum_{\substack{f(z)=w \\ |z|>R}} \dfrac{1}{|z|^{(\tau-1)t}}.
    \end{align*}
    Since $t>\frac{\rho}{\tau-1}$, we can choose $\alpha$ such that $\rho<\alpha<(\tau-1)t$, then for $|z| > R$ we have 
    $$|z|^{(\tau-1)t}=|z|^{\alpha}|z|^{(\tau-1)t-\alpha}\geq |z|^{\alpha}R^{(\tau-1)t-\alpha}\geq |z|^{\alpha}R^{(\tau-1)t-\rho}.$$
    Therefore, the above estimate can be continued with,
    $$\mathcal{L}_t \mathbb{1}_{B}(w) \leq c^t T^{-(1+\frac{1}{M}-\tau)t} \sum_{\substack{f(z)=w \\ |z|>R}} \dfrac{1}{|z|^{(\tau-1)t}} \leq c^t T^{-(1+\frac{1}{M}-\tau)t} \dfrac{1}{R^{(\tau-1)t-\rho}}\sum_{\substack{f(z)=w \\ |z|>R}} \dfrac{1}{|z|^{\alpha}}.$$
    It is enough to consider $r_t=(\tau-1)t-\rho$ and $c_t=c^t T^{-(1+\frac{1}{M}-\tau)t}M_{\alpha}$.
\end{proof}
\noindent This helps us to show that the farther we get from the origin, the smaller the density of $\nu_s$. 
\begin{lemma}\label{tighetness}
    There exist $r_t, c_t >0$ such that for every $R>1$ and every $s>P_t$ we have 
    $$\nu_s \left( B(R)\cap \mathcal{J}(f) \right) \leq \frac{c_t}{R^{r_t}}.$$
\end{lemma}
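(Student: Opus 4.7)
The plan is to propagate the pointwise decay estimate $\mathcal{L}_t\mathbb{1}_{B}\leq (c_t/R^{r_t})\mathbb{1}$ from Lemma \ref{shriking transfer operator outside diks} through the series that defines $\nu_s$, using the positivity and semigroup structure of the iterates $\mathcal{L}_t^n$. Writing $B=B(R)\cap\mathcal{J}(f)$, the functional equation for the adjoint operator turns the problem into
$$\nu_s(B)=\frac{1}{S}\sum_{n=1}^{\infty}b_n e^{-ns}\,\mathcal{L}_t^n\mathbb{1}_B(w_0),$$
so the task reduces to bounding each term $\mathcal{L}_t^n\mathbb{1}_B(w_0)$ by $c_t/R^{r_t}$ multiplied by a quantity whose weighted sum is comparable to $S$.

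For each $n\geq 1$ I would factor $\mathcal{L}_t^n\mathbb{1}_B=\mathcal{L}_t^{n-1}(\mathcal{L}_t\mathbb{1}_B)$. By Lemma \ref{shriking transfer operator outside diks} the inner factor is pointwise dominated by $(c_t/R^{r_t})\mathbb{1}$, and since $\mathcal{L}_t^{n-1}$ is monotone on nonnegative real-valued functions (directly from its defining formula), applying it preserves the inequality and gives
$$\mathcal{L}_t^n\mathbb{1}_B(w_0)\leq \frac{c_t}{R^{r_t}}\,\mathcal{L}_t^{n-1}\mathbb{1}(w_0).$$
Substituting back and shifting the index by $m=n-1$ reduces the proof to comparing
$$e^{-s}\sum_{m=0}^{\infty}b_{m+1}e^{-ms}\mathcal{L}_t^{m}\mathbb{1}(w_0) \quad \text{with} \quad S=\sum_{m=1}^{\infty}b_m e^{-ms}\mathcal{L}_t^m\mathbb{1}(w_0).$$

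The main obstacle, and really the only delicate point, is ensuring that the ratio of this shifted series to $S$ is bounded uniformly in $s>P_t$. The hypothesis $b_{m+1}/b_m\to 1$ provides a constant $C>0$ with $b_{m+1}\leq Cb_m$ for every $m\geq 1$, so the tail with $m\geq 1$ is controlled via $\sum_{m\geq 1}b_{m+1}e^{-ms}\mathcal{L}_t^m\mathbb{1}(w_0)\leq CS$. The isolated $m=0$ contribution is $b_1$, which after multiplying by $e^{-s}$ is controlled through $e^{-s}b_1/S\leq 1/\mathcal{L}_t\mathbb{1}(w_0)$, using that $S$ dominates its first term $b_1 e^{-s}\mathcal{L}_t\mathbb{1}(w_0)$. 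Together with $e^{-s}<e^{-P_t}$ on $(P_t,\infty)$, this bounds the ratio by a constant depending only on $t$, which I would absorb into a possibly enlarged $c_t$ while keeping $r_t$ unchanged, yielding the claimed inequality.
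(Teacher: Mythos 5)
Your proposal is correct and takes essentially the same approach as the paper: factor $\mathcal{L}_t^n\mathbb{1}_B=\mathcal{L}_t^{n-1}(\mathcal{L}_t\mathbb{1}_B)$, use positivity of the transfer operator together with Lemma~\ref{shriking transfer operator outside diks} to extract the factor $c_t/R^{r_t}$, re-index, and compare the shifted series to $S$ via the bounded ratios $b_n/b_{n+1}$, handling the stray boundary term through $S\geq b_1e^{-s}\mathcal{L}_t\mathbb{1}(w_0)$. The only cosmetic difference is that the paper peels off the $n=1$ term before factoring whereas you factor uniformly and isolate the $m=0$ term after re-indexing; this is the same maneuver.
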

\begin{proof}
First, note that for a Borel set $A\subseteq \mathcal{J}(f)$ we have,
\begin{align*}
       \nu_s \left(A \right) =\dfrac{1}{S}\sum_{n=1}^{\infty}b_ne^{-ns}\left[\left(\mathcal{L}_t^n \right)^*\delta_{w_0}\right]\left(A \right) &=\dfrac{1}{S}\sum_{n=1}^{\infty}b_ne^{-ns}\int \mathcal{L}_t^n \mathbb{1}_A d\delta_{w_0} \\
       &=\dfrac{1}{S}\sum_{n=1}^{\infty}b_ne^{-ns} \mathcal{L}_t^n \mathbb{1}_A(w_0)
       \end{align*}
This implies 
       $$\dfrac{1}{S}b_1e^{-s}\mathcal{L}_t \mathbb{1}(w_0) \leq \nu_s(\mathcal{J}(f))=1.$$
Now we set $B=B(R)\cap \mathcal{J}(f)$, then by the above estimate and lemma \ref{shriking transfer operator outside diks} we can write
    \begin{align*}
       \nu_s \left(B \right) &=\dfrac{1}{S}b_1e^{-s}\mathcal{L}_t \mathbb{1}_B(w_0) +\dfrac{1}{S}\sum_{n=2}^{\infty}b_ne^{-ns} \mathcal{L}_t^{n-1}\left(\mathcal{L}_t \mathbb{1}_B\right)(w_0) \\
       &\leq \dfrac{1}{\mathcal{L}_t \mathbb{1}(w_0)}\mathcal{L}_t \mathbb{1}_B(w_0)+ \dfrac{1}{S}\sum_{n=2}^{\infty}b_ne^{-ns} \sum_{f^{n-1}(z)=w_0}|(f^{n-1})'(z)|_{\tau}^{-t} \mathcal{L}_t \mathbb{1}_B(z) \\
       &\leq \dfrac{1}{\mathcal{L}_t \mathbb{1}(w_0)}\dfrac{c_t}{R^{r_t}} + \dfrac{1}{S}\sum_{n=2}^{\infty}b_ne^{-ns}\frac{c_t}{R^{r_t}} \sum_{f^{n-1}(z)=w_0}|(f^{n-1})'(z)|_{\tau}^{-t} \\
       &= \dfrac{1}{\mathcal{L}_t \mathbb{1}(w_0)}\dfrac{c_t}{R^{r_t}} + \dfrac{1}{S}\sum_{n=2}^{\infty}b_ne^{-ns}\frac{c_t}{R^{r_t}} \mathcal{L}_t^{n-1}\mathbb{1}(w_0) \\
       &= \dfrac{1}{\mathcal{L}_t \mathbb{1}(w_0)}\dfrac{c_t}{R^{r_t}} + \frac{c_t}{R^{r_t}}\dfrac{e^{-s}}{S}\sum_{n=2}^{\infty}\frac{b_n}{b_{n-1}}b_{n-1}e^{-(n-1)s} \mathcal{L}_t^{n-1}\mathbb{1}(w_0) \\
       &\asymp \dfrac{1}{\mathcal{L}_t \mathbb{1}(w_0)}\dfrac{c_t}{R^{r_t}} + \frac{c_t}{R^{r_t}}\dfrac{e^{-s}}{S}\sum_{n=2}^{\infty}b_{n-1}e^{-(n-1)s} \mathcal{L}_t^{n-1}\mathbb{1}(w_0) \\
       &=\dfrac{1}{\mathcal{L}_t \mathbb{1}(w_0)}\dfrac{c_t}{R^{r_t}} + \frac{c_t}{R^{r_t}}e^{-s}\nu_s(\mathcal{J}(f)) \leq \dfrac{1}{R^{r_t}}\left(\dfrac{c_t}{\mathcal{L}_t \mathbb{1}(w_0)}+c_te^{-P_t} \right).
    \end{align*}
\end{proof}
\begin{proposition}\label{conformal measure existence}
    There exists a $e^{P_t}e^{-\Phi_t}$-conformal measure $m_t$ for $f$.
\end{proposition}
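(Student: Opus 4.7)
The plan is to extract a weak limit of the family $\{\nu_s\}_{s>P_t}$ via Prokhorov's theorem and then verify that any such limit is a fixed vector of $e^{-P_t}(\mathcal{L}_t)^*$; Proposition \ref{equivalence of eigenmeasure and conformal measure} will then deliver the desired conformality immediately. Lemma \ref{tighetness} supplies the uniform tightness: for any $\epsilon>0$, choose $R$ with $c_t/R^{r_t}<\epsilon$, so that $\nu_s(\mathcal{J}(f)\setminus D(0,R)) < \epsilon$ uniformly in $s>P_t$, while $\mathcal{J}(f)\cap \overline{D(0,R)}$ is compact. Prokhorov's theorem then produces a sequence $s_k \searrow P_t$ along which $\nu_{s_k}$ converges weakly to a Borel probability measure $m_t$ supported in $\mathcal{J}(f)$.

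Next, I would establish the asymptotic eigen-relation $\nu_s(\mathcal{L}_t\varphi) - e^s\nu_s(\varphi) \to 0$ as $s\searrow P_t$, for every $\varphi \in C_b(\mathcal{J}(f),\mathbb{C})$. The key computation is a reindexing,
\begin{equation*}
    \nu_s(\mathcal{L}_t\varphi) \;=\; \frac{1}{S}\sum_{n=1}^{\infty} b_n e^{-ns}\mathcal{L}_t^{n+1}\varphi(w_0) \;=\; \frac{e^s}{S}\sum_{m=2}^{\infty}\frac{b_{m-1}}{b_m}\,b_m e^{-ms}\mathcal{L}_t^m\varphi(w_0),
\end{equation*}
giving
\begin{equation*}
    \nu_s(\mathcal{L}_t\varphi) - e^s\nu_s(\varphi) \;=\; \frac{e^s}{S}\sum_{m=2}^{\infty}\!\left(\frac{b_{m-1}}{b_m}-1\right)b_m e^{-ms}\mathcal{L}_t^m\varphi(w_0) \;-\; \frac{b_1}{S}\mathcal{L}_t\varphi(w_0).
\end{equation*}
The boundary term vanishes since $S\to \infty$ as $s\searrow P_t$. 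For the main sum, given $\epsilon>0$ pick $N$ with $|b_{m-1}/b_m-1|<\epsilon$ for $m>N$: the first $N$ summands contribute $O(1/S)$, while the tail is bounded by $\epsilon e^s\|\varphi\|$ using $|\mathcal{L}_t^m\varphi|\leq \|\varphi\|\mathcal{L}_t^m\mathbb{1}$ together with $\sum_m b_m e^{-ms}\mathcal{L}_t^m\mathbb{1}(w_0)=S$.

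Since $\mathcal{L}_t$ preserves $C_b(\mathcal{J}(f),\mathbb{C})$ by Proposition \ref{boundedness of transfer operator} (continuity of $\mathcal{L}_t\varphi$ being a short verification from the distortion estimate of Lemma \ref{distortion of transfer operator 0} applied termwise and dominated convergence), weak convergence $\nu_{s_k} \to m_t$ upgrades the above asymptotic to $m_t(\mathcal{L}_t\varphi) = e^{P_t}m_t(\varphi)$, i.e.\ $(\mathcal{L}_t)^*m_t = e^{P_t}m_t$ on $C_b$. A monotone-class argument extends this identity from $C_b$ to Borel indicators, after which Proposition \ref{equivalence of eigenmeasure and conformal measure} with $c=e^{P_t}$ concludes that $m_t$ is $e^{P_t}e^{-\Phi_t}$-conformal.

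The main obstacle I anticipate is the tail estimate in the second step: the ratio $b_{m-1}/b_m$ tends to $1$ only asymptotically, so the argument genuinely relies on the divergence $S(s)\to\infty$ as $s\searrow P_t$ to absorb both the $N$ initial terms and the boundary piece from reindexing -- which is precisely the feature the auxiliary weights $\{b_n\}$ (as constructed in \citep[p.~356]{MeromorphicDynamics}) were introduced to provide.
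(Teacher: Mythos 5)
Your proposal is correct and follows essentially the same route as the paper: extract a weak limit $m_t$ of $\nu_{s_k}$ via Lemma \ref{tighetness} and Prokhorov's theorem, establish $(\mathcal{L}_t)^*m_t = e^{P_t}m_t$ by reindexing the sum defining $\nu_s$ and using $b_{n-1}/b_n \to 1$ together with $S \to \infty$ to kill the error, and conclude via Proposition \ref{equivalence of eigenmeasure and conformal measure}. The one small divergence is bookkeeping for the finitely many indices where $b_{m-1}/b_m$ is far from $1$: the paper normalizes these away with a ``without loss of generality $b_1=\dots=b_N=1$'' step (which, strictly speaking, changes $\nu_s$ and hence the extracted limit, and therefore deserves the justification you supply), whereas you bound them explicitly by $O(1/S)$; your version is the cleaner and more defensible of the two.
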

\begin{proof}
    Given a sequence of numbers $\{s_j\}$ such that $s_j>P_t$ and $s_j \to P_t$, there exists a probability measure $m_t$ on $\mathcal{J}(f)$ such that a subsequence of $\{ \nu_{s_j}\}$ converges weakly to $m_t$ by Prokhorov theorem, see theorem 8.6.2, Vol II \citep[~ p. 202]{Bogachev}. Given any $K>1$, there is $N$ such that for any $n>N$
    $$ K^{-1} \leq \dfrac{b_n}{b_{n+1}}\leq K.$$
    Without loss of the results we proved so far we can assume $b_1=b_2=...=b_N=1$. Therefore, for every $n\in \mathbb{N}$ 
    $$ K^{-1} \leq \dfrac{b_n}{b_{n+1}}\leq K.$$
    By the equation,
    $$\left(\mathcal{L}_t \right)^*\nu_s =\dfrac{1}{S}\sum_{n=1}^{\infty}b_ne^{-ns}\left(\mathcal{L}_t^{n+1} \right)^*\delta_{w_0}=\dfrac{e^s}{S}\sum_{n=1}^{\infty}\frac{b_n}{b_{n+1}}b_{n+1}e^{-(n+1)s}\left(\mathcal{L}_t^{n+1} \right)^*\delta_{w_0},$$
    we find the following estimate
    $$K^{-1}\dfrac{e^s}{S}\sum_{n=1}^{\infty}b_{n+1}e^{-(n+1)s}\left(\mathcal{L}_t^{n+1} \right)^*\delta_{w_0}\leq \left(\mathcal{L}_t \right)^*\nu_s  \leq K\dfrac{e^s}{S}\sum_{n=1}^{\infty}b_{n+1}e^{-(n+1)s}\left(\mathcal{L}_t^{n+1} \right)^*\delta_{w_0}.$$
    Now by adding the term $b_1e^{-s}\left(\mathcal{L}_t\right)^*\delta_{w_0}$ to the sum, we find
    $$K^{-1}e^s\left( \nu_s - \frac{1}{S}b_1e^{-s}\left(\mathcal{L}_t\right)^*\delta_{w_0}\right)\leq \left(\mathcal{L}_t \right)^*\nu_s \leq Ke^s\left( \nu_s - \frac{1}{S}b_1e^{-s}\left(\mathcal{L}_t\right)^*\delta_{w_0}\right).$$
    Now taking the limit over the obtained subsequence of $s_{j}$, we obtain
    $$K^{-1}e^{P_t}m_t\leq \left(\mathcal{L}_t \right)^*m_t \leq Ke^{P_t}m_t.$$
    Since $K>1$ was arbitrary, we deduce $\left(\mathcal{L}_t \right)^*m_t=e^{P_t}m_t.$ Thuse, proposition \ref{equivalence of eigenmeasure and conformal measure} finishes the proof.
\end{proof}
\section{Construction of Invariant Gibbs State}

\noindent We introduce the normalized transfer operator 
 $$\hat{\mathcal{L}}_t: C_b(\mathcal{J}(f),\mathbb{C}) \to C_b(\mathcal{J}(f),\mathbb{C})$$
$$\hat{\mathcal{L}}_t=e^{-P_t}\mathcal{L}_t.$$
\begin{lemma}\label{upper bound normalized transfer operator}
    There is $L>0$ such that for every $n \in \mathbb{N}$, we have $\|\hat{\mathcal{L}}_t^n\mathbb{1}\|\leq L.$
\end{lemma}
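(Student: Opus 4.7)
The plan is to split $\mathcal{J}(f)$ into a bounded piece and a tail: on the bounded piece I will derive the bound by a distortion-plus-integration argument using the conformal identity $\int \hat{\mathcal{L}}_t^n\mathbb{1}\,dm_t = 1$, and on the tail I will close a contractive linear recursion built from the decay estimates of lemmas \ref{estimate for transfer operator} and \ref{shriking transfer operator outside diks}. The identity $\int \hat{\mathcal{L}}_t^n\mathbb{1}\,dm_t = 1$ is the pairing with $\mathbb{1}$ of $(\mathcal{L}_t^n)^*m_t = e^{nP_t}m_t$, which follows from proposition \ref{equivalence of eigenmeasure and conformal measure}(d) applied to the measure $m_t$ of proposition \ref{conformal measure existence}. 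The tightness bound of lemma \ref{tighetness} transfers to $m_t$ by the Portmanteau theorem on the open set $B(R)\cap\mathcal{J}(f)$, giving $m_t(B(R)\cap\mathcal{J}(f)) \leq c_t/R^{r_t}$. I then fix $R_1$ large enough that
\[
\varepsilon_1 := e^{-P_t}c^tR_1^{-(1+\frac{1}{M}-\tau)t}M_{(\tau-1)t}, \qquad \varepsilon_2 := e^{-P_t}c_t/R_1^{r_t}
\]
are both less than $1/2$ and simultaneously $m_t(\overline{D(0,R_1)}\cap\mathcal{J}(f)) \geq 1/2$; all three conditions can be met because $\tau < 1+\tfrac{1}{M}$ and $(\tau-1)t > \rho$.

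For the bounded piece, applying lemma \ref{distortion of transfer operator} with $R := R_1+1$ yields $\hat{\mathcal{L}}_t^n\mathbb{1}(w_1) \leq cK_{R_1+1}^t \hat{\mathcal{L}}_t^n\mathbb{1}(w_2)$ for every $w_1,w_2 \in \overline{D(0,R_1)}\cap\mathcal{J}(f)$ and every $n$. Integrating this in $w_2$ against $m_t$ over $\overline{D(0,R_1)}\cap\mathcal{J}(f)$ and using both $\int\hat{\mathcal{L}}_t^n\mathbb{1}\,dm_t=1$ and $m_t(\overline{D(0,R_1)}\cap\mathcal{J}(f))\geq 1/2$ produces the $n$-independent bound $\hat{\mathcal{L}}_t^n\mathbb{1}(w_1) \leq 2cK_{R_1+1}^t =: A^*$ for $w_1$ in this piece.

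For the tail piece I set $B_n := \sup_{w \in B(R_1)\cap\mathcal{J}(f)} \hat{\mathcal{L}}_t^n\mathbb{1}(w)$ and use $\hat{\mathcal{L}}_t^n\mathbb{1}(w) = e^{-P_t}\sum_{f(z)=w}|f'(z)|_\tau^{-t}\hat{\mathcal{L}}_t^{n-1}\mathbb{1}(z)$. Splitting the preimage sum according to $|z|\leq R_1$ versus $|z|>R_1$ and using the bounds from the previous paragraph, the first part is at most $A^* \cdot e^{-P_t}\mathcal{L}_t\mathbb{1}(w) \leq A^*\varepsilon_1$ via lemma \ref{estimate for transfer operator} (recall $|w|>R_1$), and the second is at most $B_{n-1} \cdot e^{-P_t}\mathcal{L}_t\mathbb{1}_{B(R_1)}(w) \leq \varepsilon_2 B_{n-1}$ via lemma \ref{shriking transfer operator outside diks}. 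This gives the recursion $B_n \leq A^*\varepsilon_1 + \varepsilon_2 B_{n-1}$, which iterates with $B_0 = 1$ to $B_n \leq A^*\varepsilon_1/(1-\varepsilon_2) + 1$, uniform in $n$. Taking $L := \max\{A^*,\, A^*\varepsilon_1/(1-\varepsilon_2) + 1\}$ finishes the proof.

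The main obstacle is the tail control: the constant $K_R$ in lemma \ref{distortion of transfer operator} grows with $R$, so one cannot compare $\hat{\mathcal{L}}_t^n\mathbb{1}$ at an arbitrarily distant $w$ to its value on a fixed bounded region via that lemma alone. Splitting the preimage sum bypasses this obstruction because the BK-class hypotheses $\tau < 1+\tfrac{1}{M}$ and $(\tau-1)t > \rho$ force both tail quantities $\varepsilon_1$ and $\varepsilon_2$ to be contractively small at infinity, converting the tail problem into a convergent geometric recursion driven by the $A^*$ bound on the bounded piece.
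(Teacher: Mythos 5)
Your argument is correct, and it reaches the same conclusion via a genuinely different handling of the tail. Both you and the paper decompose $\mathcal{J}(f)$ into a bounded disk and its complement, and both use the conformal identity $\int \hat{\mathcal{L}}_t^n\mathbb{1}\,dm_t = 1$ together with Lemma~\ref{distortion of transfer operator} to produce an $n$-uniform bound on the disk. The divergence is in the tail. The paper first observes that $\hat{\mathcal{L}}_t\varphi(w)\to 0$ as $w\to\infty$ for any bounded $\varphi$, which forces the supremum of $\hat{\mathcal{L}}_t^n\mathbb{1}$ to be \emph{attained} at some $w_n\in\mathcal{J}(f)$; it then picks $R$ so that $\hat{\mathcal{L}}_t\mathbb{1}(w)\leq 1$ for $|w|\geq R$, and runs a one-line induction: if $|w_{n}|\geq R$ then $\|\hat{\mathcal{L}}_t^{n}\mathbb{1}\| \leq \|\hat{\mathcal{L}}_t^{n-1}\mathbb{1}\|\cdot\hat{\mathcal{L}}_t\mathbb{1}(w_{n}) \leq \|\hat{\mathcal{L}}_t^{n-1}\mathbb{1}\|$, and otherwise the disk bound applies. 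You instead set $B_n := \sup_{B(R_1)\cap\mathcal{J}(f)}\hat{\mathcal{L}}_t^n\mathbb{1}$, split the preimage sum at $|z|=R_1$, and close the geometric recursion $B_n \leq A^*\varepsilon_1 + \varepsilon_2 B_{n-1}$ using the explicit decay constants from Lemmas~\ref{estimate for transfer operator} and \ref{shriking transfer operator outside diks}. The paper's route is shorter (it never has to split the preimage sum or quantify $\varepsilon_1,\varepsilon_2$), since positivity of $\hat{\mathcal{L}}_t$ lets it absorb the whole tail into $\|\hat{\mathcal{L}}_t^{n-1}\mathbb{1}\|$ at one stroke. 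Your route avoids the appeal to attainment of the supremum and makes the contraction mechanism completely explicit, which is a little longer but arguably more self-contained; it would also survive in settings where the vanishing-at-infinity argument for attained suprema is unavailable. One small point worth making explicit in a polished write-up: the transfer of the tightness bound from $\nu_s$ to $m_t$ via Portmanteau works because $B(R)\cap\mathcal{J}(f)$ is relatively open in $\mathcal{J}(f)$, so $m_t(B(R)\cap\mathcal{J}(f)) \leq \liminf_j \nu_{s_j}(B(R)\cap\mathcal{J}(f)) \leq c_t/R^{r_t}$; you state this but it is the one place where a reader might want to see the inequality spelled out.
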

\begin{proof}
    First, from the lemma \ref{estimate for transfer operator} it follows 
    $$\lim_{w \to \infty} \hat{\mathcal{L}}_t \varphi(w)=0, \hspace{1cm} \varphi \in C_b \left(\mathcal{J}(f), \mathbb{C} \right).$$
    This implies $\| \hat{\mathcal{L}}_t^n \mathbb{1} \|=\| \hat{\mathcal{L}}_t \hat{\mathcal{L}}_t^{n-1} \mathbb{1} \|=\hat{\mathcal{L}}_t^n \mathbb{1}(w_n)$ for some $w_n\in \mathcal{J}(f)$.
    We choose $R>0$ such that $\hat{\mathcal{L}}_t \mathbb{1}(w) \leq 1$ when $|w|\geq R$.
    By lemma \ref{distortion of transfer operator} there exist $c>0, \;K_R>1$ such that for every $w\in B=\mathcal{J}(f)\cap D(0,R)$, we have
    \begin{equation}\label{in proof 1}
        m_t(B)c^{-1}K_R^{-t}\hat{\mathcal{L}}_t^n \mathbb{1}(w)\leq \int_B \hat{\mathcal{L}}_t^n \mathbb{1}dm_t\leq \int \hat{\mathcal{L}}_t^n \mathbb{1}dm_t=1,
    \end{equation}
    where the last equality holds due to $\left(\hat{\mathcal{L}}_t \right)^*m_t=m_t$ as shown in proposition \ref{conformal measure existence}. For $L=\max \left(1, cK_R ^t\frac{1}{m_t(B)}\right)$, we then have $\|\hat{\mathcal{L}}_t \mathbb{1} \| \leq L$. Given $\|\hat{\mathcal{L}}_t^k\mathbb{1}\|\leq L$. If $|w_{k+1}|\geq R$, then 
$$\|\hat{\mathcal{L}}_t^{k+1}\mathbb{1}\|=\hat{\mathcal{L}}_t^{k+1}\mathbb{1}(w_{k+1})=\hat{\mathcal{L}}_t \left(\hat{\mathcal{L}}_t^{k}\mathbb{1}\right)(w_{k+1})\leq \|\hat{\mathcal{L}}_t^k\mathbb{1}\|\leq L.$$
Otherwise, we have $w_{k+1}\in B$ and the estimate (\ref{in proof 1}) also yields $\|\hat{\mathcal{L}}_t^{k+1}\mathbb{1}\|\leq L.$ Therefore by induction, the proof is completed. 
    
\end{proof}
\begin{lemma}\label{lower bound normalized transfer operator}
    For every $R>0$, there exists $l_R>0$ such that for every  $w \in \mathcal{J}(f)\cap D(0,R)$ and every $n\in \mathbb{N}$, we have $l_R \leq \hat{\mathcal{L}}_t^n\mathbb{1}(w)$.
\end{lemma}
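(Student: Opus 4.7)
The plan is to exploit the conformality identity $\bigl(\hat{\mathcal{L}}_t\bigr)^\ast m_t = m_t$, which yields $\int \hat{\mathcal{L}}_t^n\mathbb{1}\, dm_t = 1$ for every $n$, and to combine it with the tightness of $m_t$ (inherited from the family $\{\nu_s\}$ via weak convergence) and the bounded-distortion estimate of Lemma \ref{distortion of transfer operator}. The intuition is that, since $\hat{\mathcal{L}}_t^n\mathbb{1}$ integrates to $1$ but is uniformly bounded above by $L$ (Lemma \ref{upper bound normalized transfer operator}), it must carry a definite amount of mass over some fixed bounded window $D(0,R^\ast)$; distortion then propagates a lower bound from a single point of that window to the whole window, and finally to any larger disk $D(0,R)$ with $R\ge R^\ast$.

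First I would pick $R^\ast>0$ large enough that $m_t\bigl(B(R^\ast)\cap\mathcal{J}(f)\bigr)\le \frac{1}{2L}$. This is possible because the tightness estimate $\nu_s(B(R)\cap\mathcal{J}(f))\le c_t/R^{r_t}$ from Lemma \ref{tighetness} survives weak convergence: the set $\{|z|>R\}\cap\mathcal{J}(f)$ is open in $\mathcal{J}(f)$, so by the Portmanteau theorem
\[
m_t\bigl(B(R^\ast)\cap\mathcal{J}(f)\bigr)\le \liminf_{j\to\infty}\nu_{s_j}\bigl(B(R^\ast)\cap\mathcal{J}(f)\bigr)\le \frac{c_t}{(R^\ast)^{r_t}}\xrightarrow{R^\ast\to\infty}0.
\]
With such an $R^\ast$ fixed, writing $D=D(0,R^\ast)\cap\mathcal{J}(f)$, I split
\[
1=\int \hat{\mathcal{L}}_t^n\mathbb{1}\,dm_t=\int_{D}\hat{\mathcal{L}}_t^n\mathbb{1}\,dm_t+\int_{\mathcal{J}(f)\setminus D}\hat{\mathcal{L}}_t^n\mathbb{1}\,dm_t,
\]
and bound the second integral by $L\cdot m_t(\mathcal{J}(f)\setminus D)\le \tfrac{1}{2}$, so that $\int_{D}\hat{\mathcal{L}}_t^n\mathbb{1}\,dm_t\ge \tfrac{1}{2}$.

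Next I would apply Lemma \ref{distortion of transfer operator} (which, being homogeneous in the factor $e^{-nP_t}$, holds verbatim for $\hat{\mathcal{L}}_t^n\mathbb{1}$): there exist $c>0$ and $K_{R^\ast}>1$ such that for any $w^\ast\in D$,
\[
\hat{\mathcal{L}}_t^n\mathbb{1}(w)\le c\,K_{R^\ast}^t\,\hat{\mathcal{L}}_t^n\mathbb{1}(w^\ast)\qquad(w\in D).
\]
Integrating this inequality over $D$ against $m_t$ and combining with the previous step gives
\[
\tfrac{1}{2}\le \int_{D}\hat{\mathcal{L}}_t^n\mathbb{1}(w)\,dm_t(w)\le c\,K_{R^\ast}^t\,m_t(D)\,\hat{\mathcal{L}}_t^n\mathbb{1}(w^\ast)\le c\,K_{R^\ast}^t\,\hat{\mathcal{L}}_t^n\mathbb{1}(w^\ast),
\]
so $\hat{\mathcal{L}}_t^n\mathbb{1}(w^\ast)\ge \frac{1}{2cK_{R^\ast}^t}=:l_{R^\ast}$ for every $w^\ast\in D$ and every $n$. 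This proves the claim for all $R\le R^\ast$. For a general $R>R^\ast$, I invoke Lemma \ref{distortion of transfer operator} once more, this time at scale $R$, producing a constant $K_R$: for $w\in D(0,R)\cap\mathcal{J}(f)$ and any fixed $w^\ast\in D$ we have $\hat{\mathcal{L}}_t^n\mathbb{1}(w^\ast)\le c\,K_R^t\,\hat{\mathcal{L}}_t^n\mathbb{1}(w)$, hence $\hat{\mathcal{L}}_t^n\mathbb{1}(w)\ge l_{R^\ast}/(c K_R^t)$, giving the desired constant $l_R$.

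The main technical point is the existence of the "reference window" $D$ on which $m_t$ puts most of its mass; everything else is a bounded-distortion argument. This in turn hinges on transferring the tightness estimate from $\nu_{s_j}$ to the weak limit $m_t$, which is the only non-routine step, and is handled by the Portmanteau characterization applied to the open set $\{|z|>R^\ast\}\cap\mathcal{J}(f)$. Note also that the constants $c, K_{R^\ast}, K_R$ are independent of $n$, so the lower bound $l_R$ is uniform in $n$, as required.
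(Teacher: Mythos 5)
Your proposal is correct and follows essentially the same route as the paper: use tightness to pick a reference window on which $m_t$ carries most of its mass, invoke $\int\hat{\mathcal{L}}_t^n\mathbb{1}\,dm_t=1$ together with the uniform upper bound $L$ to force a definite amount of that integral onto the window, and then propagate a pointwise lower bound over the window (and to any $D(0,R)$) via Lemma \ref{distortion of transfer operator}. Your variations — integrating the distortion inequality rather than extracting a single point $w_n$ with $\hat{\mathcal{L}}_t^n\mathbb{1}(w_n)\geq 3/4$, spelling out the Portmanteau step behind the tightness of $m_t$, and handling general $R>R^{\ast}$ explicitly — are cosmetic refinements of the same argument, not a different approach.
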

\begin{proof}
    By lemma \ref{upper bound normalized transfer operator} we know there exists $L>0$ such that $\|\hat{\mathcal{L}}_t^n\mathbb{1}\|\leq L$ for every $n\in \mathbb{N}$. Following lemma \ref{tighetness} we can choose $R>0$ such that $m_t(B(R))\leq \frac{1}{4L}$. Therefore,
    $$1=\int \hat{\mathcal{L}}_t^n \mathbb{1}dm_t =\int _{D(0,R)}\hat{\mathcal{L}}_t^n \mathbb{1}dm_t+\int_{B(R)}\hat{\mathcal{L}}_t^n \mathbb{1}dm_t\leq \int _{D(0,R)}\hat{\mathcal{L}}_t^n \mathbb{1}dm_t+\frac{1}{4}.$$
    Hence, there should be $w_n \in \mathcal{J}(f) \cap D(0,R)$ such that $\hat{\mathcal{L}}_t^n \mathbb{1}(w_n)\geq \frac{3}{4}$. Now applying lemma \ref{distortion of transfer operator} we find $l_R>0$ such that $\frac{3}{4}l_R \leq l_R \hat{\mathcal{L}}_t^n \mathbb{1}(w_n)\leq \hat{\mathcal{L}}_t^n \mathbb{1}(w)$ for every $w \in \mathcal{J}(f) \cap D(0,R)$.
\end{proof}

\noindent Now we can give a better description of the pressure.
\begin{proposition}
    $P_t= \lim_{n \to \infty}\frac{1}{n}\log \mathcal{L}_t^n\mathbb{1}(w),$ for every $w \in \mathcal{J}(f).$
\end{proposition}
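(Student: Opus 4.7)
The plan is to strengthen the $\limsup$ in the definition of $P_t$ to a genuine limit by sandwiching $\frac{1}{n}\log\mathcal{L}_t^n\mathbb{1}(w)$ between two sequences that both tend to $P_t$. The essential work has already been done in the two preceding lemmas, which provide uniform upper and lower bounds on $\hat{\mathcal{L}}_t^n\mathbb{1}=e^{-nP_t}\mathcal{L}_t^n\mathbb{1}$; once these are in hand, passing to $\frac{1}{n}\log$ reduces the multiplicative constants to $O(1/n)$ error terms that vanish as $n\to\infty$.

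More concretely, I would proceed in two steps. First, Lemma \ref{upper bound normalized transfer operator} supplies a constant $L>0$ such that $\hat{\mathcal{L}}_t^n\mathbb{1}(w)\leq L$ for every $n\in\mathbb{N}$ and every $w\in\mathcal{J}(f)$. Rewriting this as $\mathcal{L}_t^n\mathbb{1}(w)\leq Le^{nP_t}$ and taking logarithms yields
$$\frac{1}{n}\log\mathcal{L}_t^n\mathbb{1}(w)\leq P_t+\frac{\log L}{n},$$
so $\limsup_{n\to\infty}\frac{1}{n}\log\mathcal{L}_t^n\mathbb{1}(w)\leq P_t$ at every $w\in\mathcal{J}(f)$. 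Second, for the matching lower bound, I would fix any $w\in\mathcal{J}(f)$ and choose $R>|w|$ so that $w\in\mathcal{J}(f)\cap D(0,R)$; then Lemma \ref{lower bound normalized transfer operator} produces a constant $l_R>0$ with $\hat{\mathcal{L}}_t^n\mathbb{1}(w)\geq l_R$ for all $n\in\mathbb{N}$, equivalently $\mathcal{L}_t^n\mathbb{1}(w)\geq l_R e^{nP_t}$, and hence
$$\frac{1}{n}\log\mathcal{L}_t^n\mathbb{1}(w)\geq P_t+\frac{\log l_R}{n},$$
so $\liminf_{n\to\infty}\frac{1}{n}\log\mathcal{L}_t^n\mathbb{1}(w)\geq P_t$.

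Combining the two inequalities gives $P_t\leq\liminf\leq\limsup\leq P_t$, so the limit exists and equals $P_t$ for every $w\in\mathcal{J}(f)$. Given how much was done in the preparatory lemmas, there is no genuinely hard step remaining; the only point worth a brief remark is that the lower-bound constant $l_R$ depends on the radius $R$, but this causes no difficulty because $w$ is fixed first and $R$ is chosen afterwards, so $\frac{\log l_R}{n}\to 0$ as $n\to\infty$ regardless of the size of $l_R$.
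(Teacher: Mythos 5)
Your proof is correct and follows essentially the same route as the paper: sandwich $\hat{\mathcal{L}}_t^n\mathbb{1}(w)$ between $l_R$ and $L$ using Lemmas \ref{upper bound normalized transfer operator} and \ref{lower bound normalized transfer operator}, then take $\frac{1}{n}\log$. The paper's own proof is just a more compressed statement of the same argument.
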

\begin{proof}
    This follows from lemmas \ref{upper bound normalized transfer operator} and \ref{lower bound normalized transfer operator}. Given $w\in \mathcal{J}(f)$ and $R>0$ such that $|w|<R$ then $l_R \leq \hat{\mathcal{L}}_t^n \mathbb{1}(w)\leq L$, i.e. $l_r \leq e^{-nP_t}{\mathcal{L}}_t^n \mathbb{1}(w)\leq L$.
\end{proof}

\noindent Next, we briefly explain how we obtain a $f-$invariant measure. For more details we refer the readers to \citep[p. 451, 453]{Munday}. We emphasize that the argument in \citep[p. 451, 453]{Munday} is under the expanding condition of $f$ which we do not have it. However, one can easily check that the proofs of what we require from \citep[p. 451, 453]{Munday} go through with some modifications.

\noindent Since $(\hat{\mathcal{L}}_t)^*m_t=m_t$ as shown in proposition \ref{conformal measure existence}, the conformal measure $m_t$ is quasi-$f-$invariant by proposition 13.5.2 in \citep[p. 453]{Munday}. Alternatively, lemma \ref{quasi-invariant} implies $m_t$ is quasi $f-$invariant with no use of eigenmeasure. Therefore, as long as we have 
\begin{itemize}
    \item[(1)] $\hat{\mathcal{L}}_th=h$ \;\;\;\; for some $h \in C_b(\mathcal{J}(f),\mathbb{R})$,
    \item[(2)] $\mathcal{L}_{m_t}=\hat{\mathcal{L}}_t$ \;\;\; a.e. on $\mathcal{J}(f)$.
\end{itemize} 
the Borel probability measure $\mu_t$ given by 
\begin{equation}\label{invariant measure}
    \mu_t(A)=\int_A h\text{d}m_t, \hspace{.5cm} \text{Borel set $A \subseteq \mathcal{J}(f)$}
\end{equation}
would be $f-$invariant by theorem 13.4.1 in \citep[p. 451]{Munday}. Item (2) is deducted from propositions 13.4.2 \& 13.5.2 in \citep[p. 451, 453]{Munday}. Therefore, we are left to show only item (1). Before showing item (1) in lemma \ref{fixed point of nomalized operator}, we express the following proposition which follows from the above explanation.
\begin{proposition}\label{existence Gibbs}
    There exists a $f-$invariant Borel probability measure $\mu_t$ which is absolutely continuous with respect to the conformal measure $m_t$.
\end{proposition}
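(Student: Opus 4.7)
The plan is short: use the fixed point $h$ of $\hat{\mathcal{L}}_t$ produced by the upcoming Lemma \ref{fixed point of nomalized operator} as the Radon--Nikodym density of $\mu_t$ with respect to $m_t$, then appeal to the cited Theorem 13.4.1 of \citep{Munday} for $f$-invariance. The preceding paragraph has already assembled every ingredient except item (1), so all the work of the proposition is packaging.

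First, I would produce a non-negative fixed point $h \in C_b(\mathcal{J}(f),\mathbb{R})$ of $\hat{\mathcal{L}}_t$ with $\int h \, dm_t = 1$. The existence of \emph{some} fixed point is exactly Lemma \ref{fixed point of nomalized operator}; moreover the natural candidate is a uniform-on-compacta accumulation point of the Ces\`aro averages $h_n := \tfrac{1}{n}\sum_{k=0}^{n-1}\hat{\mathcal{L}}_t^k\mathbb{1}$. These averages are non-negative and, exploiting $(\hat{\mathcal{L}}_t)^* m_t = m_t$ from Proposition \ref{conformal measure existence}, satisfy $\int h_n \, dm_t = 1$ for every $n$. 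Since Lemma \ref{upper bound normalized transfer operator} gives $\|h_n\|\le L$ uniformly, dominated convergence forces $\int h \, dm_t = 1$. The lower bound from Lemma \ref{lower bound normalized transfer operator} additionally ensures $h \geq l_R$ on every $\overline{D(0,R)}\cap \mathcal{J}(f)$, so $h$ is genuinely positive and not a trivial density.

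Second, I set $\mu_t(A) := \int_A h \, dm_t$ for every Borel $A \subseteq \mathcal{J}(f)$. This is visibly a Borel probability measure with $\mu_t \ll m_t$. Finally, item (2) in the preceding paragraph, namely $\mathcal{L}_{m_t} = \hat{\mathcal{L}}_t$ $m_t$-a.e. (from Propositions 13.4.2 and 13.5.2 of \citep{Munday}), together with item (1), $\hat{\mathcal{L}}_t h = h$, is precisely what Theorem 13.4.1 of \citep{Munday} requires to conclude $\mu_t \circ f^{-1} = \mu_t$.

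The substantive obstacle is external to this proposition: constructing the fixed point $h$ in Lemma \ref{fixed point of nomalized operator}. I expect that proof to combine the uniform upper bound (Lemma \ref{upper bound normalized transfer operator}), the compact lower bound (Lemma \ref{lower bound normalized transfer operator}), the Lipschitz control (Lemma \ref{distortion of transfer operator 0}), and the tail decay (Lemma \ref{shriking transfer operator outside diks}); the last of these will be essential for passing $\hat{\mathcal{L}}_t$ through a limit taken only uniformly on compact subsets of the non-compact set $\mathcal{J}(f)$. Within the present proposition itself, the only delicate point is ensuring $\int h \, dm_t$ is simultaneously finite and nonzero, which is handled cleanly by dominated convergence applied to the Ces\`aro averages.
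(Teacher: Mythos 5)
Your proposal is correct and follows essentially the same route as the paper: take the fixed point $h$ of $\hat{\mathcal{L}}_t$ from Lemma~\ref{fixed point of nomalized operator} (itself a Ces\`aro-average / Arzel\`a--Ascoli construction, exactly as you anticipate) as the density of $\mu_t$ against $m_t$, and invoke Theorem~13.4.1 of \citep{Munday} together with the Munday propositions for the $f$-invariance. The only thing you add beyond the paper's wording is the explicit normalization check $\int h\,dm_t = 1$ via $(\hat{\mathcal{L}}_t)^*m_t = m_t$ and dominated convergence, which the paper leaves implicit but which is a clean and correct detail.
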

\begin{lemma}\label{fixed point of nomalized operator}
    The normalized operator $\hat{\mathcal{L}}_t$ has a fixed point $h$.
\end{lemma}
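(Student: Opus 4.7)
The plan is to produce $h$ as a subsequential limit of the Ces\`aro averages
$$h_n := \frac{1}{n}\sum_{k=0}^{n-1}\hat{\mathcal{L}}_t^k\mathbb{1},$$
exploiting the telescoping identity $\hat{\mathcal{L}}_t h_n - h_n = \tfrac{1}{n}(\hat{\mathcal{L}}_t^n\mathbb{1}-\mathbb{1})$ to force the limit to be a fixed point. This is the standard Ces\`aro strategy, and the two ingredients already in the paper (uniform boundedness and the bounded-distortion estimate) are precisely what one needs to execute it.

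First I would establish compactness. By lemma \ref{upper bound normalized transfer operator}, $\|h_n\| \leq L$ for every $n$. For equicontinuity, lemma \ref{distortion of transfer operator 0}, rewritten for $\hat{\mathcal{L}}_t = e^{-P_t}\mathcal{L}_t$ (the factor $e^{-nP_t}$ cancels on both sides), together with the uniform bound $\hat{\mathcal{L}}_t^k\mathbb{1}\leq L$, yields
$$\big|\hat{\mathcal{L}}_t^k\mathbb{1}(w_1) - \hat{\mathcal{L}}_t^k\mathbb{1}(w_2)\big| \leq te^{tK}KL\,|w_1-w_2|,\qquad |w_1-w_2|<\delta,$$
and averaging preserves this Lipschitz estimate. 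Exhausting $\mathcal{J}(f)$ by the compact sets $K_j := \mathcal{J}(f)\cap\overline{D(0,j)}$ and applying Arzel\`a--Ascoli with a diagonal extraction, I would obtain a subsequence $h_{n_j}$ converging uniformly on every $K_j$ to a function $h$ that is Lipschitz on the scale $\delta$, bounded by $L$, and (by lemma \ref{lower bound normalized transfer operator}) bounded below by $l_R$ on $\mathcal{J}(f)\cap D(0,R)$. In particular $h\in C_b(\mathcal{J}(f),\mathbb{R})$ and $h\not\equiv 0$.

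It remains to pass to the limit in $\hat{\mathcal{L}}_t h_{n_j} = h_{n_j} + \tfrac{1}{n_j}(\hat{\mathcal{L}}_t^{n_j}\mathbb{1}-\mathbb{1})$. The right-hand side converges to $h(w)$ at every $w$ since $\|\hat{\mathcal{L}}_t^{n_j}\mathbb{1}-\mathbb{1}\|\leq L+1$. For the left-hand side, fix $w\in\mathcal{J}(f)$: the summands $e^{-P_t}|f'(z)|_\tau^{-t}h_{n_j}(z)$ converge pointwise to $e^{-P_t}|f'(z)|_\tau^{-t}h(z)$ and are dominated by $L\,e^{-P_t}|f'(z)|_\tau^{-t}$, whose sum over $f^{-1}(w)$ equals $L\hat{\mathcal{L}}_t\mathbb{1}(w)\leq L^2<\infty$ by the boundedness of $\hat{\mathcal{L}}_t$ (proposition \ref{boundedness of transfer operator}, with the Borel bound (\ref{Borel theorem 2})). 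Dominated convergence gives $\hat{\mathcal{L}}_t h_{n_j}(w)\to \hat{\mathcal{L}}_t h(w)$, so $\hat{\mathcal{L}}_t h(w) = h(w)$ for every $w$.

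The principal delicacy is that $\mathcal{J}(f)$ is noncompact and $\hat{\mathcal{L}}_t$ involves summation over an infinite preimage set, so both steps above need a summable/Lipschitz envelope uniform in the iterate -- exactly what the distortion lemma \ref{distortion of transfer operator 0} and the uniform bound from lemma \ref{upper bound normalized transfer operator} supply. Once those are in place the Ces\`aro argument is routine.
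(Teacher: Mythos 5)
Your proposal is correct and follows essentially the same route as the paper: Ces\`aro averages of $\hat{\mathcal{L}}_t^k\mathbb{1}$, uniform boundedness and equicontinuity from lemmas \ref{upper bound normalized transfer operator} and \ref{distortion of transfer operator 0}, Arzel\`a--Ascoli on an exhaustion by compacts, then passing the limit through $\hat{\mathcal{L}}_t$ and telescoping. The one cosmetic difference is that you invoke dominated convergence (with the summable envelope $L\,e^{-P_t}|f'(z)|_\tau^{-t}$) to justify $\hat{\mathcal{L}}_t h_{n_j}(w)\to\hat{\mathcal{L}}_t h(w)$, whereas the paper does the same thing by hand with an explicit $\epsilon$--$R$ tail split of $\hat{\mathcal{L}}_t\mathbb{1}(w)$; these are equivalent.
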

\begin{proof}
    We define a sequence of functions below,
    $$h_n(w)=\dfrac{1}{m}\sum_{k=1}^m \hat{\mathcal{L}}_t^n \mathbb{1}(w), \; \; \; w \in \mathcal{J}(f).$$
    Note that for every $R>0$ we have $l_R \leq h_n(w)\leq L$ for every $w\in \mathcal{J}(f)\cap D(0,R)$ by lemmas \ref{upper bound normalized transfer operator} and \ref{lower bound normalized transfer operator}. As $L$ is independent of $R$, so $(h_n)$ is uniformly bounded. Furthermore, lemma \ref{distortion of transfer operator 0} implies that $(h_n)$ is an equicontinuous family of functions. Hence, by Arzela-Ascoli's theorem we obtain a subsequence $(h_{n_j})$ of $(h_n)$ such that $h_{n_j} \rightarrow  h \in C_b(\mathcal{J}(f),\mathbb{R})$ uniformely on compact sets of $\mathcal{J}(f)$. Next, we show $\hat{\mathcal{L}}_t h_{n_j} \to \hat{\mathcal{L}}_t h$ (pointwise convergence). Given $\epsilon>0$, for every $w\in \mathcal{J}(f)$ there exists $R>0$ such that  $\sum_{\substack{f(z)=w \\ |z|>R}} \exp(\Phi_t(z)-P_t) < \epsilon,$
    as a tail of $\hat{\mathcal{L}}_t \mathbb{1}(w)$. Also, there exists $j_0$ such that $|h_{n_j}(z)-h(z)| <\epsilon$ for all $j\geq j_0$.
    Hence,
    \begin{align*}
         & \; \; \; \;  \left| \hat{\mathcal{L}}_t h_{n_j}(w) - \hat{\mathcal{L}}_t h(w)\right|\\
        &\leq \left|\sum_{\substack{f(z)=w \\ |z|\leq R}} \left( h_{n_j}(z)-h(z)\right)\exp\left(\Phi_t(z)-P_t  \right) \right|+\left|\sum_{\substack{f(z)=w \\ |z|>R}} \left( h_{n_j}(z)-h(z)\right)\exp\left(\Phi_t(z)-P_t  \right) \right|\\
        &\leq \epsilon\hat{\mathcal{L}}_t \mathbb{1}(w)+2L\epsilon\leq 3L\epsilon,
    \end{align*}
    for all $j\geq j_0$.
    But we know $\hat{\mathcal{L}}_t h_{n_j}=\frac{1}{n_j}\left(\sum_{k=1}^{n_j}\hat{\mathcal{L}}_t^k\mathbb{1} -\hat{\mathcal{L}}_t^k\mathbb{1}+\hat{\mathcal{L}}_t^{n_j+1}\mathbb{1}\right) \to h$. Thus, $\hat{\mathcal{L}}_th=h$.
\end{proof}
\begin{lemma}\label{derivative}
     $h=\frac{d\mu_t}{dm_t}$ has the following properties,
    \begin{itemize}
        \item[(a)] For every $R>0$ there exist positive constants $L,l_R$ such that $l_R \leq h(w) \leq L$ for every $w\in \mathcal{J}(f)\cap D(0,R)$. The constants $L, l_R$ are the same as the constants obtained in lemmas \ref{upper bound normalized transfer operator} and \ref{lower bound normalized transfer operator}.
        \item[(b)] There exists $c_t>0$ such that for every $w \in \mathcal{J}(f)$ we have $h(w)\leq c_t|w|^{-(1+\frac{1}{M}-\tau)t}$.
    \end{itemize}
\end{lemma}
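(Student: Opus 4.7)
The plan is to derive both parts directly from the construction of $h$ given in Lemma \ref{fixed point of nomalized operator}, combined with the pointwise estimate for the transfer operator already recorded in Lemma \ref{estimate for transfer operator}. No new analytic input is needed.

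For part (a), I would exploit that $h$ is the uniform-on-compacta limit (along a subsequence $n_j$) of the Cesàro averages
$$h_n(w) = \frac{1}{n}\sum_{k=1}^{n}\hat{\mathcal{L}}_t^k \mathbb{1}(w).$$
By Lemma \ref{upper bound normalized transfer operator}, every summand $\hat{\mathcal{L}}_t^k\mathbb{1}$ is bounded by $L$ on all of $\mathcal{J}(f)$, so $h_n(w)\le L$ globally on $\mathcal{J}(f)$. By Lemma \ref{lower bound normalized transfer operator}, every summand is bounded below by $l_R$ on $\mathcal{J}(f)\cap D(0,R)$, hence so is $h_n$. Passing to the pointwise limit along $n_j$ preserves both inequalities, giving (a) with the same $l_R$ and the same $L$. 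In particular, this argument supplies the \emph{global} bound $\|h\|_\infty\le L$ on $\mathcal{J}(f)$, which will be needed immediately below.

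For part (b), I would invoke the fixed-point identity $\hat{\mathcal{L}}_t h = h$, i.e. $h(w)=e^{-P_t}\mathcal{L}_t h(w)$, and apply Lemma \ref{estimate for transfer operator} with $\varphi=h$. Using the global bound $\|h\|\le L$ from (a) and the uniform Borel-type estimate (\ref{Borel theorem 2}), which bounds $\sum_{f(z)=w}|z|^{-(\tau-1)t}$ by $M_{(\tau-1)t}$ for every $w\in\mathcal{J}(f)$, one obtains
$$h(w) \;\le\; e^{-P_t} c^{t}\,|w|^{-(1+\frac{1}{M}-\tau)t}\,\|h\|\sum_{f(z)=w}\frac{1}{|z|^{(\tau-1)t}} \;\le\; e^{-P_t} c^{t} L\, M_{(\tau-1)t}\,|w|^{-(1+\frac{1}{M}-\tau)t},$$
so the conclusion holds with $c_t := e^{-P_t} c^{t} L\, M_{(\tau-1)t}$.

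Neither step should present a real obstacle: the construction of $h$ and the transfer operator estimates have already done all of the work. The only point that deserves care is the order of the argument — one must first extract the \emph{global} sup-norm bound $\|h\|\le L$ in part (a) before invoking Lemma \ref{estimate for transfer operator} in part (b), since that lemma requires the test function to be globally bounded; the purely local lower bound $l_R$ in (a) is consistent with the fact that $h(w)\to 0$ as $|w|\to\infty$, as part (b) will confirm.
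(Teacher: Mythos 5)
Your proof is correct and follows essentially the same route as the paper: part (a) by carrying the bounds $l_R \le h_n \le L$ through the limit defining $h$, and part (b) by combining the fixed-point equation $\hat{\mathcal{L}}_t h = h$ with Lemma \ref{estimate for transfer operator} and the Borel estimate. Your explicit remark that one first needs the global bound $\|h\|\le L$ before applying Lemma \ref{estimate for transfer operator} is a nice clarification of the logical order that the paper leaves implicit.
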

\begin{proof}
    Item (a) follows from the proof of lemma \ref{fixed point of nomalized operator} that for every $R>0$ we have $l_R \leq h_n(w)\leq L$ for every $w\in \mathcal{J}(f)\cap D(0,R)$. Item (b) follows from lemma \ref{estimate for transfer operator} (that $h$ is a fixed point of $\hat{\mathcal{L}}_t$) and lemma \ref{fixed point of nomalized operator}.
\end{proof}
\begin{proposition}\label{equivalence of measures}
    The invariant measure $\mu_t$ is equivalent to the conformal measure $m_t$.
\end{proposition}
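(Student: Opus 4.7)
The plan is to exploit the fact that $\mu_t$ has density $h$ with respect to $m_t$, so the absolute continuity $\mu_t \ll m_t$ is immediate from the definition (\ref{invariant measure}); the content of the proposition is the reverse relation $m_t \ll \mu_t$, which amounts to showing that $h$ is strictly positive $m_t$-almost everywhere on $\mathcal{J}(f)$.

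For the forward direction, if $m_t(A)=0$ then $\mu_t(A)=\int_A h\,dm_t = 0$ since $h$ is bounded by lemma \ref{derivative}(a) (or, more cheaply, since the integrand vanishes $m_t$-a.e.). For the reverse direction, suppose $\mu_t(A)=0$ for some Borel $A\subseteq \mathcal{J}(f)$. Since $\mathcal{J}(f) = \hat{\mathcal{J}}(f)\setminus\{\infty\}$ excludes the point at infinity, I would write $A=\bigcup_{n=1}^{\infty} A_n$ with $A_n=A\cap D(0,n)$, and it suffices to prove $m_t(A_n)=0$ for each $n$.

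The key step is to invoke the pointwise lower bound established in lemma \ref{derivative}(a): for every $R>0$ there exists $l_R>0$ such that $h(w)\geq l_R$ for all $w\in \mathcal{J}(f)\cap D(0,R)$. Specialising to $R=n$ gives
$$0 \;=\; \mu_t(A_n) \;=\; \int_{A_n} h\, dm_t \;\geq\; l_n\, m_t(A_n),$$
so $m_t(A_n)=0$, and countable subadditivity yields $m_t(A)=0$. This establishes $m_t \ll \mu_t$ and hence the equivalence.

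There is essentially no obstacle here: all the real work has been done already in lemma \ref{lower bound normalized transfer operator} (the uniform-on-compacta lower bound for $\hat{\mathcal{L}}_t^n\mathbb{1}$) and transferred to $h$ in lemma \ref{derivative}(a). The only mild subtlety is that the constant $l_R$ degenerates as $R\to\infty$, which is why one cannot conclude a global lower bound $h\geq l>0$ on all of $\mathcal{J}(f)$; the exhaustion $A=\bigcup_n A_n$ circumvents this issue cleanly.
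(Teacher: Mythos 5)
Your proposal is correct and matches the paper's own proof, which simply cites Lemma \ref{derivative}(a); you have merely spelled out the exhaustion-by-disks argument that the paper leaves implicit. No issues.
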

\begin{proof}
    This follows from lemma \ref{derivative} item (a).
\end{proof}
\section{Uniqueness of Conformal Measure and Ergodicity}
\begin{lemma}\label{quasi-invariant}
    Given a constant $c$ and a $ce^{-\Phi_t}-$conformal measure $m$ for $f$. Then the following items hold.
    \begin{itemize}
        \item[(a)] For every $R>0$, there exists $c_R>0$ such that for every Borel set  $B \subseteq \mathcal{J}(f) \cap B(R)$,
    $$m(f^{-1}(B))\leq c_R m(B),$$
    where $c_R \to 0$, as $R\to \infty$.
        \item[(b)]  $m$ is quasi-invariant, i.e. $m(f^{-1}(B))=0$ when $m(B)=0$.
        \item[(c)] There exist $R>0$ and $0<c_R<1$ such that for every $n \in \mathbb{N}$ and every Borel set  $B \subseteq \mathcal{J}(f) \cap B(R)$,
        $$m \left( \cap_{i=0}^{n}f^{-i}(B) \right) \leq c_R^nm(B).$$
        \item[(d)]  For every $w_1\in \mathcal{J}(f)$, every $z\in f^{-n}(\{w_1\})$ and every Borel set  $B \subseteq \mathcal{J}(f) \cap D(w_1,\delta)$,
    $$m(f_z^{-n}(B))= c^{-n}\int_B \exp\left(S_n\Phi_t(f_z^{-n}(w))\right)dm(w).$$
    \end{itemize}
\end{lemma}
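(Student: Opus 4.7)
The plan is to first upgrade the conformality hypothesis to the equivalent forms supplied by Proposition \ref{equivalence of eigenmeasure and conformal measure}. The single useful identity behind parts (a)--(c) is the pullback formula
\begin{equation*}
    m(f^{-1}(B)) = c^{-1}\int_B \mathcal{L}_t\mathbb{1}(w)\,dm(w), \qquad B \subseteq \mathcal{J}(f) \text{ Borel},
\end{equation*}
which I would derive by applying $(\mathcal{L}_t)^*m = cm$ to the bounded measurable test function $\mathbb{1}_B \circ f$ and observing that $\mathcal{L}_t(\mathbb{1}_B \circ f)(w) = \mathbb{1}_B(w)\,\mathcal{L}_t\mathbb{1}(w)$.

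For part (a), I combine this with Lemma \ref{estimate for transfer operator} and the Borel-theorem bound (\ref{Borel theorem 2}) to obtain $\mathcal{L}_t\mathbb{1}(w) \leq c_1^t|w|^{-(1+\frac{1}{M}-\tau)t}M_{(\tau-1)t}$. Since $\tau<1+\tfrac{1}{M}$ forces the exponent $1+\tfrac{1}{M}-\tau$ to be strictly positive, any $w\in B\subseteq B(R)\cap\mathcal{J}(f)$ satisfies $|w|>R$, and plugging this bound into the pullback identity yields the claim with $c_R := c^{-1}c_1^tM_{(\tau-1)t}R^{-(1+\frac{1}{M}-\tau)t}$, which tends to $0$ as $R\to\infty$. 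Part (b) is immediate once (a)'s machinery is in hand: for any Borel $B \subseteq \mathcal{J}(f)$ with $m(B)=0$, the integrand $\mathcal{L}_t\mathbb{1}$ is uniformly bounded on $\mathcal{J}(f)$ (Proposition \ref{boundedness of transfer operator}, using $|w|\geq T>0$), so $\int_B \mathcal{L}_t\mathbb{1}\,dm = 0$.

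Part (c) is a short induction on $n$ via the set-theoretic identity
\begin{equation*}
    \bigcap_{i=0}^{n}f^{-i}(B) = B \cap f^{-1}\Bigl(\bigcap_{i=0}^{n-1}f^{-i}(B)\Bigr) \subseteq f^{-1}\Bigl(\bigcap_{i=0}^{n-1}f^{-i}(B)\Bigr).
\end{equation*}
Since $\bigcap_{i=0}^{n-1}f^{-i}(B) \subseteq B \subseteq B(R)\cap\mathcal{J}(f)$, part (a) applies at each stage with the same constant $c_R$; choosing $R$ so large that $c_R<1$ and iterating gives $m\bigl(\bigcap_{i=0}^{n}f^{-i}(B)\bigr) \leq c_R^{n}m(B)$.

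For part (d), I would apply Proposition \ref{equivalence of eigenmeasure and conformal measure}(a) to $A = f_z^{-n}(B)$, on which $f^n$ is injective (it is the inverse of the analytic branch $f_z^{-n}$) with $f^n(A) = B$; this yields $m(B) = c^n\int_A \exp(-S_n\Phi_t)\,dm$. Running the same argument with $B$ replaced by an arbitrary Borel $B' \subseteq B$ and $A$ by $A' = f_z^{-n}(B')$ shows, as measures on $B$,
\begin{equation*}
    (f_z^{-n})_*(m|_B) \;=\; c^n\exp(-S_n\Phi_t)\,m|_A,
\end{equation*}
so $dm|_A = c^{-n}\exp(S_n\Phi_t)\,d[(f_z^{-n})_*(m|_B)]$. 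Integrating the constant function $1$ over $A$ and applying change-of-variables under the pushforward $f_z^{-n}$ produces the identity in (d). The principal obstacle is the decay in (a): ensuring that Lemma \ref{estimate for tau derivative stronger} (which exploits the BK-class condition to yield the $|w|^{-(1+\frac{1}{M}-\tau)t}$ factor) and the Nevanlinna/Borel bound (\ref{Borel theorem 2}) combine into a \emph{uniform} polynomial decay of $\mathcal{L}_t\mathbb{1}(w)$ in $|w|$; once that is secured, (b), (c), and (d) follow by elementary manipulations of the pullback identity.
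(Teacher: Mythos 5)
Your proof is correct, but for parts (a) and (b) it takes a genuinely different route from the paper's. You route through the adjoint--eigenvalue equation $(\mathcal{L}_t)^*m = cm$ supplied by Proposition \ref{equivalence of eigenmeasure and conformal measure} to get the clean pullback identity $m(f^{-1}(B)) = c^{-1}\int_B \mathcal{L}_t\mathbb{1}\,dm$, which reduces (a) to the pointwise decay of $\mathcal{L}_t\mathbb{1}$ from Lemma \ref{estimate for transfer operator} and the Borel bound (\ref{Borel theorem 2}), and then makes (b) immediate. The paper instead argues directly from the conformal-measure definition: it bounds $m(f_z^{-1}(B))$ branch by branch for $B$ contained in a $\delta$-disk, using Koebe distortion (Lemma \ref{exp Ionescu S_n}) together with Lemma \ref{estimate for tau derivative stronger}, sums over the preimages via (\ref{Borel theorem 2}), and assembles the global estimate with a disjoint cover of a general $B$ by $\delta$-disks. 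Both routes produce $c_R \asymp R^{-(1+\frac{1}{M}-\tau)t}$, and yours is shorter; but note that the paper explicitly advertises this lemma, in the paragraph following Proposition \ref{conformal measure existence}, as an alternative proof of quasi-invariance ``with no use of eigenmeasure,'' so routing back through $(\mathcal{L}_t)^*m = cm$ forfeits that particular point even though the argument is logically sound. Part (c) is the same induction in both. For (d), the paper deduces the identity more directly from $(\mathcal{L}_t^n)^*m = c^n m$ and the observation $\mathcal{L}_t^n\mathbb{1}_{f_z^{-n}(B)} = \mathbb{1}_B\cdot\exp\!\left(S_n\Phi_t\circ f_z^{-n}\right)$, sidestepping the pushforward bookkeeping you carry out, but your version is correct.
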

\begin{proof}
     Given $f(z)=w\in \mathcal{J}(f)$ and a Borel set $B\subseteq D(w,\delta) \cap \mathcal{J}(f)$, where $\delta$ was given in the equation (\ref{delta}). By lemma \ref{exp Ionescu S_n} one can find $K_t>1$ such that for all $w_1\in B,$
     $$\exp\left(\Phi_t\left(f^{-1}(w_1)\right)\right)=\left|(f_z^{-1})'(w_1)\right|_{\tau}^{t}\leq K_t \left|(f_z^{-1})'(w)\right|_{\tau}^{t}.$$
     Therefore, by lemma \ref{estimate for tau derivative stronger} and the fact that $f: f^{-1}_z(B) \to B$ is bijective we obtain,
     $$m(B)=m(f(f^{-1}_z(B)))=\int_{f_z^{-1}(B)}ce^{-\Phi_t}\text{d}m \geq cK_t^{-1}(d^{t})^{-1}\dfrac{|w|^{(1+\frac{1}{M}-\tau)t}}{|z|^{-(\tau-1)t}} m(f_z^{-1}(B)).$$
     This means there exists $b_t>0$ such that 
     $$m(f_z^{-1}(B)) \leq b_t \dfrac{|w|^{-(1+\frac{1}{M}-\tau)t}}{|z|^{(\tau-1)t}} m(B).$$
     and therefore by the estimate (\ref{Borel theorem 2}) we have,
     \begin{align*}
         m(f^{-1}(B))=\sum_{f(z)=w}m(f_z^{-1}(B)) &\leq b_tm(B)|w|^{-(1+\frac{1}{M}-\tau)t}\sum_{f(z)=w}\dfrac{1}{|z|^{(\tau-1)t}}\\
         &\leq  b_tm(B)|w|^{-(1+\frac{1}{M}-\tau)t} M_{(\tau-1)t}
     \end{align*}
     Given $R>0$ and a Borel set $B\subseteq \mathcal{J}(f) \cap B(R)$, one can find a countable set $\{w_i\}_i \subseteq B$ such that 
     $B \subseteq \cup_i D(w_i,\delta)$. Inductively, we can obtain $B_i\subseteq D(w_i,\delta)$ such that $B=\cup_i B_i$ where $B_{i}$'s are disjoint. Then the above estimate gives,
     \begin{align*}
    m\left(f^{-1}(B)\right) = \sum_i m\left(f^{-1}(B_{i})\right) &\leq \sum_i b_t m(B_i)|w_{i}|^{-(1+\frac{1}{M}-\tau)t}M_{(\tau-1)t}\\
    & \leq \sum_i b_tm(B_i)R^{-(1+\frac{1}{M}-\tau)t}M_{(\tau-1)t}\\
    & \leq b_tR^{-(1+\frac{1}{M}-\tau)t}M_{(\tau-1)t}m(B).
     \end{align*}
     This proves item (a).\\
     Item (b) follows from item (a) with regards to (\ref{T}).\\
     For item (c), first one can choose $R>0$ large enough such that $c_R<1$ by item (a). Then $m(B\cap f^{-1}(B))\leq m(f^{-1}(B))\leq c_Rm(B)$. Therefore, by the following estimate
     $$m\left( \cap_{i=0}^nf^{-i}(B)\right) \leq m\left( \cap_{i=1}^nf^{-i}(B)\right)=m\left( f^{-1}\left(\cap_{i=0}^{n-1}f^{-i}(B)\right)\right),$$
     one can use induction to establish item (c).\\
     Finally, item (d) follows from proposition \ref{equivalence of eigenmeasure and conformal measure} item (d).
\end{proof}
\begin{lemma}\label{limit inf bounded}
    Given a constant $c$ and a $ce^{-\Phi_t}-$conformal measure $m$ for $f$. There exists $R_t>0$ such that 
    $$\liminf_{n \to \infty}|f^n(z)|\leq R_t \hspace{.7cm} m-\text{a.e. \;} z \in \mathcal{J}(f).$$
\end{lemma}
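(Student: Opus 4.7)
The plan is to use part (c) of Lemma \ref{quasi-invariant} directly: that lemma supplies an $R>0$ and a constant $0<c_R<1$ with
\[
m\!\left(\bigcap_{i=0}^{n} f^{-i}(B)\right) \le c_R^{\,n}\, m(B)
\qquad \text{for every Borel } B \subseteq \mathcal{J}(f)\cap B(R).
\]
I set $R_t := R$ and $B := \mathcal{J}(f)\cap B(R_t)$ and argue that the set where $\liminf_n |f^n(z)| > R_t$ is an $m$-null set. The forward argument proceeds by noting that $\liminf_n |f^n(z)| > R_t$ forces the existence of some $N$ (depending on $z$) such that $f^n(z) \in B(R_t)$ for every $n\ge N$. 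Thus
\[
\{z \in \mathcal{J}(f) : \liminf_{n\to\infty} |f^n(z)| > R_t\}
\;\subseteq\; \bigcup_{N=0}^{\infty} f^{-N}\!\left(\bigcap_{i=0}^{\infty} f^{-i}(B)\right).
\]

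The first step is to let $n\to\infty$ in the estimate from (c); since the sets $\bigcap_{i=0}^{n} f^{-i}(B)$ decrease in $n$ and $m$ is finite, continuity of measure from above gives $m\!\left(\bigcap_{i=0}^{\infty} f^{-i}(B)\right) = 0$. The second step is to transfer this nullity under preimages of $f$: by quasi-invariance, Lemma \ref{quasi-invariant}(b), $m(A)=0$ implies $m(f^{-1}(A))=0$, and iterating gives $m(f^{-N}(A))=0$ for every $N\in\mathbb{N}$. Countable subadditivity then yields $m\!\left(\bigcup_{N} f^{-N}(\bigcap_{i=0}^{\infty} f^{-i}(B))\right)=0$, which by the inclusion above finishes the proof.

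There is really no serious obstacle; the content is packaged in Lemma \ref{quasi-invariant}. The only point requiring a small amount of care is the measurability and inclusion relating $\{\liminf |f^n| > R_t\}$ to $\bigcup_{N} f^{-N}(\bigcap_{i} f^{-i}(B))$, which is just the observation that a sequence with liminf strictly larger than $R_t$ must eventually lie in $B(R_t)$. One could equivalently phrase the argument by showing that for each fixed $N$,
\[
\bigcap_{n=N}^{\infty} f^{-n}(B) \;=\; f^{-N}\!\left(\bigcap_{i=0}^{\infty} f^{-i}(B)\right),
\]
so each such set has $m$-measure zero by (c) and (b), and then taking a countable union over $N$. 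The conclusion is that the complement $\{z : \liminf_n |f^n(z)| \le R_t\}$ has full $m$-measure.
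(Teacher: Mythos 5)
Your proof is correct and is essentially the same as the paper's: the paper likewise uses the inclusion $\{z : \liminf_n |f^n(z)| > R\} \subseteq \bigcup_{i=0}^{\infty} f^{-i}\bigl(\bigcap_{j=0}^{\infty} f^{-j}(B)\bigr)$, applies Lemma~\ref{quasi-invariant}(c) to conclude $m\bigl(\bigcap_{j} f^{-j}(B)\bigr)=0$, and then invokes Lemma~\ref{quasi-invariant}(b) together with countable subadditivity to finish. The only cosmetic difference is that you spell out the continuity-from-above step and the identity $\bigcap_{n\ge N} f^{-n}(B) = f^{-N}\bigl(\bigcap_{i\ge 0} f^{-i}(B)\bigr)$, which the paper treats as immediate.
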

\begin{proof}
    We choose $R>0$ from item (c) in lemma \ref{quasi-invariant}. Note that we have the following inclusion, 
    $$\{z \in \mathcal{J}(f) \; : \; \liminf_{n \to \infty} |f^n(z)| >R\} \subseteq \cup_{i=0}^{\infty} \cap_{j=i}^{\infty}f^{-j}(B)=\cup_{i=0}^{\infty}f^{-i}\left( \cap_{j=0}^{\infty}f^{-j}(B)\right),$$
    where $B=\mathcal{J}(f)\cap B(R)$. Clearly, $m\left( \cap_{j=0}^{\infty}f^{-j}(B)\right)=0$ by the same item (c). Now we apply item (b) in lemma \ref{quasi-invariant} to find  $m\left(f^{-i} \left( \cap_{j=0}^{\infty}f^{-j}(B)\right)\right)=0$ for each $i$. This finishes the proof.
\end{proof}
\noindent We now prove an inequality resembling Gibbs property for every conformal measure. First, we consider the set $\mathcal{J}_{r,N} \subseteq \mathcal{J}_r(f)$ consisting of $z$ where the orbit of $z$ under $f$ has a limit point in $D(0, N)$, where $N>0$. Note that when $N=R_t$ from the previous lemma, then clearly $m(\mathcal{J}_{r,N})=1.$
\begin{lemma}\label{Gibbs}
    Given a constant $c$ and a $ce^{-\Phi_t}-$conformal measure $m$ for $f$. For every $N>0$, there exists $K_N>1$ such that for every $z \in \mathcal{J}_{r,N}$ we have
    $$K_N^{-1}c^{-n_k}\exp(S_{n_k}\Phi_t(z)) \leq m\left(\mathcal{J}(f)\cap D\left(z,\frac{\delta}{4} |(f^{n_k})'(z)|^{-1}\right) \right) \leq K_N c^{-n_k}\exp(S_{n_k}\Phi_t(z)),$$
    for every $k\in\mathbb{N}$.
\end{lemma}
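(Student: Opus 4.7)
The plan is to transport the measure estimate at $z$ back to a uniform estimate at the well-behaved image $w_k := f^{n_k}(z)$ via the conformal change-of-variable formula of item (d) of lemma \ref{quasi-invariant}. Since $z \in \mathcal{J}_{r,N}$, I fix the subsequence $(n_k)$ realizing the limit point of the orbit in $D(0,N)$; then eventually $w_k \in K_N := \mathcal{J}(f) \cap \overline{D(0,N+1)}$, a compact set, and the inverse branch $\phi_k := f_z^{-n_k}$ is well-defined and univalent on $D(w_k, 2\delta)$, with $\phi_k(w_k) = z$ and $|\phi_k'(w_k)| = |(f^{n_k})'(z)|^{-1}$.

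Combining Koebe's $1/4$-theorem with the Koebe distortion theorem (the latter underlies lemma \ref{Koebe f'}) applied to $\phi_k$ on $D(w_k,\delta)$, I obtain a universal $r_1 \in (0,\delta)$, independent of $k$ and $z$, producing the sandwich
$$\phi_k\bigl(D(w_k,r_1)\bigr) \subseteq D\bigl(z, \tfrac{\delta}{4}|(f^{n_k})'(z)|^{-1}\bigr) \subseteq \phi_k\bigl(D(w_k,\delta)\bigr).$$
Intersecting with $\mathcal{J}(f)$ (using complete invariance of the Julia set) and applying item (d) of lemma \ref{quasi-invariant} together with the bound $|S_{n_k}\Phi_t(\phi_k(w)) - S_{n_k}\Phi_t(z)| \leq tK\delta$ on $D(w_k,\delta)$ (from lemma \ref{Ionescu S_n}), I arrive at
$$m\bigl(\phi_k(B)\bigr) \asymp c^{-n_k} \exp\bigl(S_{n_k}\Phi_t(z)\bigr)\, m(B), \qquad B \in \bigl\{ D(w_k,r_1) \cap \mathcal{J}(f),\; D(w_k,\delta) \cap \mathcal{J}(f) \bigr\},$$
with absolute proportionality constants. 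Since $m$ is a probability, $m(D(w_k,\delta) \cap \mathcal{J}(f)) \leq 1$ and the upper bound follows immediately.

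The main obstacle is the lower bound, which reduces to producing $\alpha_N > 0$ with $m(D(w,r_1) \cap \mathcal{J}(f)) \geq \alpha_N$ for all $w \in K_N$. The map $w \mapsto m(D(w,r_1) \cap \mathcal{J}(f))$ is lower semicontinuous (by Fatou applied to the family $\{\mathbb{1}_{D(w,r_1)}\}_w$) and $K_N$ is compact, so it suffices to show pointwise positivity, i.e.\ $m(D(w,r_1) \cap \mathcal{J}(f)) > 0$ for every $w \in \mathcal{J}(f)$. If some $w^*$ violated this, the blow-up property of the Julia set used in the proof of lemma \ref{distortion of transfer operator} (extended from radius $\delta$ to $r_1$ by one extra iterate, justified by the Rippon--Stallard expansion) would yield $N_1 \in \mathbb{N}$ with $f^{N_1}\bigl(D(w^*,r_1)\bigr) \supseteq \mathcal{J}(f) \cap D(0,R)$ for arbitrarily large $R$. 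Decomposing $D(w^*, r_1) \cap \mathcal{J}(f)$ into a countable disjoint union on which $f^{N_1}$ is injective and invoking item (a) of proposition \ref{equivalence of eigenmeasure and conformal measure} would give $m\bigl(\mathcal{J}(f) \cap D(0,R)\bigr) \leq c^{N_1} \int_{D(w^*,r_1) \cap \mathcal{J}(f)} e^{-S_{N_1}\Phi_t}\, dm = 0$, contradicting $m(\mathcal{J}(f)) = 1$. Absorbing all the universal constants together with $\alpha_N^{-1}$ into a single $K_N$ completes the proof.
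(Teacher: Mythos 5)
Your proof is correct and follows the same overall route as the paper's: form the Koebe sandwich for the inverse branch $f_z^{-n_k}$ around $w_k = f^{n_k}(z)$, apply the conformality identity (item (d) of lemma \ref{quasi-invariant}) to both the inner and outer image disks, and use lemma \ref{exp Ionescu S_n} to replace the Birkhoff sum at $f_z^{-n_k}(w)$ by the one at $z$ with uniformly bounded distortion. The one substantive difference is that you explicitly supply a step the paper glosses over: for the lower bound, one needs a constant $\alpha_N > 0$, depending only on $N$, with $m(D(w,r_1)\cap\mathcal{J}(f)) \geq \alpha_N$ for all $w$ in a compact subset of $\mathcal{J}(f)$ near $\overline{D(0,N)}$; without this the constant in the left-hand inequality would depend on $z$ (through its limit point $y$), not merely on $N$. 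You prove this via lower semicontinuity of $w \mapsto m(D(w,r_1))$, compactness, and a contradiction argument combining the blow-up property of the Julia set with item (a) of proposition \ref{equivalence of eigenmeasure and conformal measure}. The paper uses the fixed limit point $y$ rather than the moving centers $w_k$ and leaves the uniform positivity implicit; your version is more careful and actually delivers the $N$-dependence claimed in the statement. Like the paper's proof, yours only establishes the estimate for $k$ large enough (so that $w_k \in \overline{D(0,N+1)}$), not literally for every $k \in \mathbb{N}$ as the lemma states, but this discrepancy is inherited from the paper and is harmless for the later applications.
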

\begin{proof}
    By Koebe's distortion theorem for every $n\in \mathbb{N}$, we have
    \begin{equation}\label{Koebe inclusion}
        D(f^n(z),\frac{\delta}{16}) \subseteq f^n \left(D\left(z,\frac{\delta}{4}|(f^n)'(z)|^{-1}\right) \right), \hspace{.3cm} D(z,\frac{\delta}{4}|(f^n)'(z)|^{-1}) \subseteq f_z^{-n} \left(D\left(f^n(z),\delta \right) \right).
    \end{equation}
    Let $z\in \mathcal{J}_{r,N}$,  there is a subsequence $n_k$ such that $f^{n_k}(z) \to y \in D(0,N)$ as $k \to \infty$. By the Rippon-Stallard (\ref{Rippon-Stallard-modified}) estimate, we find $|(f^{n_k})'(z)|^{-1} < 1$ for all large enough $k$. Now the latter inclusion in (\ref{Koebe inclusion}) combined with lemma \ref{quasi-invariant} item (d) and lemma \ref{exp Ionescu S_n} gives the right-hand inequality, while the former one implies
    $$f_z^{-n_k}\left(D(y,\frac{\delta}{32})\right) \subseteq D\left(z,\frac{\delta}{4}|(f^{n_k})'(z)|^{-1}\right) ,$$
    for all large enough $k$. This is enough to obtain the left-hand inequality by lemma \ref{quasi-invariant} item (d) and lemma \ref{exp Ionescu S_n}.
\end{proof}
\begin{proposition}\label{unique}
    Given a constant $c$ and a $ce^{-\Phi_t}-$conformal measure $m$ for $f$, then $c=e^{P_t}$. Also, $m$ and $m_t$ are equivalent.
\end{proposition}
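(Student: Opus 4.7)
The plan is to first pin down $c$ by feeding the spectral identity $(\mathcal{L}_t)^* m = c m$ from Proposition~\ref{equivalence of eigenmeasure and conformal measure} into the uniform upper and lower bounds on $\hat{\mathcal{L}}_t^n \mathbb{1}$ established in Lemmas~\ref{upper bound normalized transfer operator} and~\ref{lower bound normalized transfer operator}, and then to deduce the equivalence $m \sim m_t$ by applying the Gibbs-type estimate of Lemma~\ref{Gibbs} to both measures and concluding via a planar Besicovitch covering argument.

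For the first step, I would iterate the spectral identity to $(\mathcal{L}_t^n)^* m = c^n m$ and integrate $\mathbb{1}$ against it, obtaining $c^n = \int \mathcal{L}_t^n \mathbb{1}\, dm$ for every $n$. Lemma~\ref{upper bound normalized transfer operator} gives $\mathcal{L}_t^n \mathbb{1} \leq L e^{nP_t}$ uniformly on $\mathcal{J}(f)$, so $c^n \leq L e^{nP_t}$, and taking $n$-th roots forces $c \leq e^{P_t}$. For the reverse direction, because $m$ is a probability measure and $\mathcal{J}(f) = \bigcup_R \bigl(\mathcal{J}(f)\cap D(0,R)\bigr)$, some $R$ gives $m(\mathcal{J}(f) \cap D(0,R)) > 0$; Lemma~\ref{lower bound normalized transfer operator} then yields $\mathcal{L}_t^n \mathbb{1}(w) \geq l_R e^{nP_t}$ for $w \in \mathcal{J}(f) \cap D(0, R)$, whence
\[
c^n \;\geq\; \int_{D(0,R)} \mathcal{L}_t^n \mathbb{1}\, dm \;\geq\; l_R\, m(\mathcal{J}(f) \cap D(0,R))\, e^{nP_t},
\]
which forces $c \geq e^{P_t}$ as $n \to \infty$. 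Hence $c = e^{P_t}$, and consequently both $m$ and $m_t$ are $e^{P_t} e^{-\Phi_t}$-conformal.

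For the equivalence, I would fix $N$ large enough that $m(\mathcal{J}_{r,N}) = m_t(\mathcal{J}_{r,N}) = 1$, which Lemma~\ref{limit inf bounded} permits after taking the larger of the two radii it supplies for $m$ and $m_t$. Applying Lemma~\ref{Gibbs} separately to each measure gives a constant so that for every $z \in \mathcal{J}_{r,N}$ there is a sequence $r_k(z) = \frac{\delta}{4}|(f^{n_k})'(z)|^{-1} \to 0$ along which both $m(D(z, r_k(z)))$ and $m_t(D(z, r_k(z)))$ are comparable to the same quantity $e^{-n_k P_t}\exp(S_{n_k}\Phi_t(z))$; dividing the two comparisons, their ratio lies in a fixed interval $[C^{-1}, C]$ uniformly in $z \in \mathcal{J}_{r,N}$ and $k$.

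The final step is a standard Besicovitch covering argument in the plane. For any Borel $E \subseteq \mathcal{J}_{r,N}$ and any open set $U \supseteq E$, the balls $\{D(z, r_k(z)) : z \in E,\, D(z, r_k(z)) \subseteq U\}$ form a fine cover of $E$ from which Besicovitch extracts a countable subcover of bounded multiplicity; summing the pointwise two-sided comparison along this subcover and taking the infimum over $U$ (using Radon regularity of $m_t$) yields $m(E) \leq C' m_t(E)$, with the reverse inequality following by symmetry. Since the complement of $\mathcal{J}_{r,N}$ is null for both measures, the equivalence extends to all of $\mathcal{J}(f)$. The main delicate point of the plan is confirming that the Gibbs constants furnished by Lemma~\ref{Gibbs} are compatible across the covering step, which is exactly what is guaranteed once $c = e^{P_t}$ makes $m$ and $m_t$ share the same conformality factor $e^{P_t} e^{-\Phi_t}$.
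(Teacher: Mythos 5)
Your proof is correct, and while the equivalence step follows the paper's route (Lemma~\ref{Gibbs} combined with a Besicovitch covering argument on $\mathcal{J}_{r,N}$), your derivation of $c=e^{P_t}$ takes a genuinely different and arguably cleaner path. The paper establishes $c=e^{P_t}$ indirectly: it assumes $c>e^{P_t}$ (resp.\ $c<e^{P_t}$), feeds both $m$ and $m_t$ into Lemma~\ref{Gibbs} over a fine Besicovitch subcover whose radii are chosen so that $n_k>1/\epsilon$, and lets the decaying factor $(c^{-1}e^{P_t})^{1/\epsilon}$ force $m(\mathcal{J}(f))=0$ (resp.\ $m_t(\mathcal{J}(f))=0$), a contradiction; only then does the same estimate, now with the decaying factor equal to $1$, yield the two-sided comparability $\frac{1}{M}m_t(B)\leq m(B)\leq Mm_t(B)$. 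You instead read $c=e^{P_t}$ off directly from the eigenmeasure identity $(\mathcal{L}_t^n)^*m=c^nm$ (available from Proposition~\ref{equivalence of eigenmeasure and conformal measure}, since $m$ is $ce^{-\Phi_t}$-conformal) together with the global bounds $\|\hat{\mathcal{L}}_t^n\mathbb{1}\|\leq L$ and $\hat{\mathcal{L}}_t^n\mathbb{1}\geq l_R$ on $D(0,R)$ from Lemmas~\ref{upper bound normalized transfer operator} and~\ref{lower bound normalized transfer operator}. Since $m$ is a probability measure you get
$$\bigl(l_R\, m(\mathcal{J}(f)\cap D(0,R))\bigr)^{1/n}e^{P_t}\;\leq\; c\;\leq\; L^{1/n}e^{P_t},$$
and letting $n\to\infty$ pins down $c$. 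This cleanly decouples the spectral identification of the conformality constant from the geometric covering step, which is then needed only for the density comparison. Both lemmas you invoke are proved prior to Proposition~\ref{unique} using only $m_t$, so there is no circularity. The one place worth being slightly more careful is that in the final covering step both $m$ and $m_t$ must be compared against the \emph{same} quantity $e^{-n_kP_t}\exp(S_{n_k}\Phi_t(z))$ at the same scale $r_k(z)$ --- which you correctly note is exactly what $c=e^{P_t}$ guarantees --- and that the constants $K_N$, $K'_N$ supplied by Lemma~\ref{Gibbs} for $m$ and $m_t$ respectively are fixed once $N$ is chosen; your plan handles this.
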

\begin{proof}
    Let $N=R_t$ from lemma \ref{Gibbs} and lemma \ref{limit inf bounded} and consider a Borel subset $B$ of $\mathcal{J}_r(f)$. Also, let $B'=B\cap \mathcal{J}_{r,N}$. Suppose $\epsilon>0$, then there exists an open set $O$ containing $B'$ such that $m_t(O \setminus B')<\epsilon$ due to regularity of $m_t$. Observe that $\{ D(z,r_z) \}_{z \in O}$ is a Besicovic cover (see \citep[p. 103]{DiBenedetto}) for $B'$ where $r_z=\frac{\delta}{4} |(f^{n_k})'(z)|^{-1}$ for some $k$ large enough from lemma \ref{Gibbs} such that $n_k>\epsilon^{-1}$. Hence, by Besicovic theorem (see \citep[p. 103]{DiBenedetto}) one can find a countable subcover $\{ D(z_i,r_{z_i}) \}_i$ for $B'$ where each $z\in B'$ appears in at most $C$ different members of the subcover, where $C$ is independent of $B$ and the cover. Now we assume $c>e^{P_t}$ and  apply lemma \ref{Gibbs} for $m$ and $m_t$ to find $K_N>1$ and $K'_N>1$ such that,
    \begin{align*}
        m(B)=m(B\cap \mathcal{J}_{r,N})=m(B') & \leq \sum_{i=1}^{\infty}m \left(D(z_i,r_{z_i}) \right)\\
        &\leq K_N \sum_{i=1}^{\infty} c^{-n_k}\exp (S_{n_k}\Phi_t(z_i))\\
        &=K_N\sum_{i=1}^{\infty} c^{-n_k}e^{P_tn_k}e^{-P_tn_k}\exp (S_{n_k}\Phi_t(z_i))\\
        & \leq K_NK'_N\sum_{i=1}^{\infty}(c^{-1}e^{P_t})^{n_k}m_t \left( D(z_i,r_{z_i}) \right)\\
        & \leq K_nK'_N (c^{-1}e^{P_t})^{1/\epsilon}Cm_t\left(\cup_{i=1}^{\infty}D(z_i,r_{z_i}) \right)\\
        & \leq K_nK'_N (c^{-1}e^{P_t})^{1/\epsilon}C \left( \epsilon+m_t(B') \right).
    \end{align*}
    Since $\epsilon>0$ is arbitrary, so $m(B)=0$. In case $c<e^{P_t}$, we also find $m_t(B)=0$ by exchanging $m$ and $m_t$ in the above estimate. In either case, the measure of $B=\mathcal{J}(f)$ is $0$, which is a contradiction. Thus, we must have $c=e^{P_t}$. Also, the above estimate gives
    $$\frac{1}{M}m_t(B) \leq m(B) \leq M m_t(B),$$
    where $M=CK_NK'_N$.
    This implies that $m$ and $m_t$ are equivalent 
\end{proof}
\begin{proposition}\label{ergodic}
    Every $e^{P_t}e^{-\Phi_t}-$ conformal measure $m$ is ergodic.
\end{proposition}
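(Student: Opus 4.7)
The plan is to argue by contradiction: suppose $B \subseteq \mathcal{J}(f)$ is a Borel set with $f^{-1}(B) = B$ and $0 < m(B) < 1$, set $A = \mathcal{J}(f) \setminus B$, and derive a contradiction through a density/blow-up argument. Note that $f^{-1}(A) = A$ as well, so both sets are fully invariant. By lemma \ref{limit inf bounded} applied with $N = R_t$, the set $\mathcal{J}_{r,N}$ has $m$-full measure, so $m(B \cap \mathcal{J}_{r,N}) > 0$. The Besicovitch differentiation theorem, valid for any finite Borel measure on $\mathbb{C}$, applied to $\mathbb{1}_B$ shows that $m$-a.e. point of $B \cap \mathcal{J}_{r,N}$ is a density point of $B$; I fix such a $z$ and extract a subsequence $(n_k)$ with $f^{n_k}(z) \to y$ for some $y \in \mathcal{J}(f)$ with $|y| \leq N$.

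Consider the shrinking balls $D_k = D(z, \tfrac{\delta}{4}|(f^{n_k})'(z)|^{-1})$; their radii tend to $0$ by the expansion \eqref{Rippon-Stallard-modified}, and Koebe's theorem gives $D_k \subseteq f_z^{-n_k}(D(f^{n_k}(z), \delta))$, so $f^{n_k}$ is univalent on $D_k$. The Koebe inclusions used in the proof of lemma \ref{Gibbs} yield $D(y, \tfrac{\delta}{32}) \subseteq f^{n_k}(D_k) \subseteq D(f^{n_k}(z), \delta) \subseteq D(y, 2\delta)$ for all large $k$. The invariance $f^{-n_k}(B) = B$ together with univalence gives $f^{n_k}(B \cap D_k) = B \cap f^{n_k}(D_k)$. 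Applying lemma \ref{quasi-invariant}(d) and the bounded distortion of lemma \ref{exp Ionescu S_n} to both $D_k$ and $B \cap D_k$, the factor $\exp(S_{n_k}\Phi_t(z))$ cancels from the ratio, yielding
$$\frac{m(B \cap D_k)}{m(D_k)} \asymp \frac{m(B \cap f^{n_k}(D_k))}{m(f^{n_k}(D_k))}$$
with constants independent of $k$. Since $z$ is a density point of $B$, the left-hand side tends to $1$; since $m(f^{n_k}(D_k)) \leq 1$ is uniformly bounded, this forces $m(A \cap f^{n_k}(D_k)) \to 0$, and hence $m(A \cap D(y, \tfrac{\delta}{32})) = 0$.

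To finish, I propagate this local annihilation of $A$ to the whole Julia set. Forward invariance $f(A) \subseteq A$ (which follows from $f^{-1}(A) = A$) combined with the fact that $f$ sends $m$-null sets to $m$-null sets (by the local conformality relation $m(f(E)) = \int_E e^{P_t}e^{-\Phi_t}dm$ on any disk of injectivity, applied over a countable cover) gives $m(A \cap f^K(D(y, \tfrac{\delta}{32}))) = 0$ for every $K \in \mathbb{N}$. After finitely many iterates, the expansion \eqref{Rippon-Stallard-modified} enlarges $f^K(D(y, \tfrac{\delta}{32}))$ to contain a disk of radius at least $\delta$ centered at a point of $\mathcal{J}(f)$, and the blow-up property of the Julia set recalled in the proof of lemma \ref{distortion of transfer operator} then provides, for each $R > 0$, a further iterate whose image covers $\mathcal{J}(f) \cap D(0, R)$. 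Letting $R \to \infty$ yields $m(A) = 0$, contradicting $m(B) < 1$. The main obstacle is the middle step: transferring the density ratio uniformly across iterates of $f$ succeeds only because lemma \ref{limit inf bounded} furnishes a subsequence along which $f^{n_k}(z)$ stays in a compact subset of $\mathbb{C}$, which keeps the Koebe distortion constants uniform and prevents the image disks from escaping to infinity.
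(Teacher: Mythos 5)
Your proof is correct in outline but takes a genuinely different route from the paper. The paper's proof of proposition \ref{ergodic} is a short abstract argument: given a fully invariant Borel set $C$ with $m(C)\neq 0,1$, form the two conditional measures $m_1(\cdot)=m(C\cap\cdot)/m(C)$ and $m_2(\cdot)=m(C^c\cap\cdot)/m(C^c)$; using $f^{-1}(C)=C$ one checks each $m_i$ is again an $e^{P_t}e^{-\Phi_t}$-conformal measure, yet they are mutually singular, contradicting proposition \ref{unique} (which forces any two such conformal measures to be equivalent). You instead give a direct density-point argument: Besicovitch differentiation to pick a density point $z$ of $B$ inside $\mathcal{J}_{r,N}$, Koebe control from lemma \ref{Gibbs} plus conformality (lemma \ref{quasi-invariant}(d)) and the bounded distortion of lemma \ref{exp Ionescu S_n} to transport the density ratio along $f^{n_k}$, and the blow-up property to propagate $m(A\cap D(y,\delta/32))=0$ to $m(A)=0$. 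Both work; your version is more hands-on and does not rely on the uniqueness proposition, while the paper's version is shorter precisely because the heavy Besicovitch-plus-distortion machinery has already been packaged into proposition \ref{unique}.

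Two small remarks on your write-up. First, the displayed comparison $\frac{m(B\cap D_k)}{m(D_k)}\asymp\frac{m(B\cap f^{n_k}(D_k))}{m(f^{n_k}(D_k))}$, combined with the left side tending to $1$, only gives a positive lower bound for the right side; to deduce $m(A\cap f^{n_k}(D_k))\to 0$ you need the same $\asymp$ comparison applied to $A$ in place of $B$ (which of course holds by the identical computation), together with the density-point statement $m(A\cap D_k)/m(D_k)\to 0$. This is a cosmetic fix. Second, the final enlargement from $D(y,\delta/32)$ to a disk of radius $\delta$ via Rippon--Stallard before invoking the blow-up property is a little hand-wavy as stated (you need injectivity of the forward iterate on the small disk together with Koebe to get the quarter disk); the cleaner route is to note that $m(A\cap D(y,\delta/32))=0$, that $A\cap f^j(D)=f^j(A\cap D)$ for every $j$ by full invariance, and that $f$ maps $m$-null sets to $m$-null sets by conformality, then apply the blow-up property (which the argument of Urba\'nski--Mayer gives for any fixed radius, not just $\delta$) directly to $D(y,\delta/32)$.
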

\begin{proof}
    Given $f^{-1}(C)=C$ where $C \subseteq \mathcal{J}(f)$ is a Borel set with $m(C)\neq0,1$. One obtains two conditional measures,
    $$m_{1}(B)=\dfrac{m(C\cap B)}{m(C)},  \hspace{.7cm} m_{2}(B)=\dfrac{m(C^c\cap B)}{m(C^c)}, \hspace{.3cm} C^c=\mathcal{J}(f)\setminus C,$$
    each of which $e^{P_t}e^{-\Phi_t}-$ conformal measure for $f$ while they are mutually singular. This contradicts the previous proposition \ref{unique}.
\end{proof}
\begin{theorem}\label{main theorem}
    Let $f \in $ BK, i.e. let $f$ be a transcendental meromorphic function of B--class with finite Nevanlinna order $\rho$ where $\infty$ is not an asymptotic value of $f$ and that there exists $M \in \mathbb{N}$ such that the multiplicity of all poles, except possibly finitely many, is at most $M$. For the geometric potential $\Phi_t(z)=t\log |f'(z)|^{-1}_{\tau}$ where $z\in \mathcal{J}(f)$ $, 1<\tau<1+\frac{1}{M}$ and $t> \frac{\rho}{\tau -1}$ the following hold,
    \begin{itemize}
        \item[(a)] There exits a unique $e^{P_t}e^{-\Phi_t}$-conformal measure $m_t$, where $P_t$ is the topological pressure.
        \item[(b)] There exists a unique Gibbs state $\mu_t$ that is $f-$invariant and equivalent to $m_t$.
        \item[(c)] $m_t$ and $\mu_t$ are ergodic with support on the radial Julia set.  
    \end{itemize}
\end{theorem}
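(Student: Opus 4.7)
The plan is to assemble the three parts of the theorem from the propositions and lemmas already established, using ergodicity as the glue for the uniqueness statements.

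For (a), existence of an $e^{P_t}e^{-\Phi_t}$-conformal probability measure $m_t$ is given by proposition \ref{conformal measure existence}. For uniqueness, let $m$ be any such measure. Proposition \ref{unique} gives $m \sim m_t$, so $\phi := dm/dm_t$ is positive $m_t$-a.e. Using the conformal identities for both $m$ and $m_t$ together with the change-of-variables formula on a Borel set $A$ on which $f$ is injective, I would derive
$$\int_{f(A)} \phi \, dm_t \;=\; m(f(A)) \;=\; \int_A e^{P_t-\Phi_t}\,\phi \, dm_t,$$
while also $\int_{f(A)} \phi \, dm_t = \int_A (\phi \circ f)\, e^{P_t-\Phi_t} \, dm_t$, forcing $\phi \circ f = \phi$ on $A$. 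Covering $\mathcal{J}(f)$ by countably many such sets gives $\phi \circ f = \phi$ $m_t$-a.e., and ergodicity of $m_t$ (proposition \ref{ergodic}) forces $\phi$ to be a constant, which must equal $1$ by normalization; hence $m = m_t$.

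For (b), existence of an $f$-invariant $\mu_t$ with $\mu_t \ll m_t$ is proposition \ref{existence Gibbs}, and $\mu_t \sim m_t$ is proposition \ref{equivalence of measures}. To verify the Gibbs property, I would combine lemma \ref{Gibbs} (a two-sided estimate for $m_t$ on the Gibbs-scale balls $D(z,\tfrac{\delta}{4}|(f^{n_k})'(z)|^{-1})$) with the density bounds in lemma \ref{derivative}(a), and use Koebe distortion to identify these balls with the inverse disks $f_z^{-n}(D(f^n(z),\delta))$ appearing in the Gibbs definition. For uniqueness: $\mu_t$ is ergodic because it is equivalent to the ergodic measure $m_t$; if $\mu'$ is any other $f$-invariant probability measure equivalent to $m_t$, then Birkhoff's ergodic theorem applied under $\mu_t$ yields pointwise time averages equal to $\int \varphi \, d\mu_t$ on a $\mu_t$-full set that is also $\mu'$-full by equivalence, and integrating under $\mu'$ (using its $f$-invariance) forces $\int \varphi\, d\mu' = \int \varphi \, d\mu_t$ for every $\varphi \in C_b(\mathcal{J}(f),\mathbb{R})$, so $\mu' = \mu_t$.

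Part (c) splits naturally: ergodicity of $m_t$ is proposition \ref{ergodic}, and ergodicity of $\mu_t$ transfers across the equivalence $\mu_t \sim m_t$. The support statement comes from lemma \ref{limit inf bounded}, which gives $R_t > 0$ with $\liminf_n |f^n(z)| \leq R_t$ for $m_t$-a.e. $z$; hence $m_t(I(f)) = 0$, and both $m_t$ and $\mu_t$ are carried on $\mathcal{J}(f) \setminus I(f)$. The main obstacle I anticipate is the Gibbs property in the paper's literal sense: lemma \ref{derivative}(b) shows $h = d\mu_t/dm_t$ decays as $|w| \to \infty$, so the density is not bounded below globally. The uniform Gibbs constant $K$ therefore has to be extracted on the radial Julia set, exploiting that along the subsequence $n_k$ of lemma \ref{Gibbs} the orbit $f^{n_k}(z)$ lies in a fixed disk $D(0, R_t)$ where lemma \ref{derivative}(a) supplies a uniform two-sided bound on $h$; matching this with the Koebe-distortion comparison between the two families of balls is the one step that is more than bookkeeping.
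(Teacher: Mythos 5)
Your proof is correct, and parts (b)--(c) essentially mirror the paper's structure, but your route to uniqueness in (a) is genuinely different. The paper proves (b) \emph{first}: it establishes uniqueness of the $f$-invariant Gibbs state equivalent to $m_t$, then deduces uniqueness of the conformal measure by pushing two candidate conformal measures $m_1, m_2$ forward to their Gibbs states $\mu_1, \mu_2$, concluding $\mu_1 = \mu_2$ from (b), and finally recovering $dm_i = h^{-1}d\mu_i$ to get $m_1 = m_2$. You instead attack (a) directly: starting from $m \sim m_t$ (Proposition~\ref{unique}), you show the Radon--Nikodym derivative $\phi = dm/dm_t$ is $f$-invariant by comparing the conformal change-of-variables identities for $m$ and $m_t$, and then invoke ergodicity of $m_t$ (Proposition~\ref{ergodic}) to force $\phi \equiv 1$. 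This is a cleaner self-contained argument for (a) that avoids routing through the Gibbs state. Similarly, for (b) uniqueness you use a Birkhoff-averaging argument under $\mu'$, whereas the paper appeals to the mutual singularity of distinct ergodic measures; both are standard and correct. Your remark on the Gibbs property is a legitimate subtlety the paper passes over: since $h = d\mu_t/dm_t$ decays to zero at infinity (Lemma~\ref{derivative}(b)), a uniform two-sided Gibbs constant cannot hold over all of $\mathcal{J}(f)$ along arbitrary times; it has to be extracted along the return times $n_k$ to a fixed compact disk supplied by Lemma~\ref{limit inf bounded}, which is exactly what Lemma~\ref{Gibbs} is set up to do. Making this restriction explicit, as you do, is an improvement in rigor over the paper's one-line citation of the relevant lemmas.
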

\begin{proof}
    First, we show item (b). The existence of $\mu_t$ was established in proposition \ref{existence Gibbs}, the equivalence was established in proposition \ref{equivalence of measures}. One can use lemma \ref{quasi-invariant} item (d), lemma \ref{derivative} item (a), lemma \ref{exp Ionescu S_n}, proposition \ref{equivalence of measures} and the inclusion (\ref{Koebe inclusion}) to see that $\mu_t$ is a Gibbs state. It remains to show the uniqueness. It is clear that the ergodicity of $m_t$ (proved in proposition \ref{ergodic}) carries over to $\mu_t$. Therefore, every two invariant Gibbs states that are equivalent to $m_t$ must be identical.

\noindent    Next, we show item (a). The existence was established in proposition \ref{conformal measure existence}. For uniquness, given $m_1$, $m_2$ two $e^{P_t}e^{-\Phi_t}$-conformal measures and $\mu_1$, $\mu_2$ their corresponding invaraint Gibbs states. We know $m_1$ and $m_2$ are equivalent by proposition \ref{unique}. Therefore, $\mu_1$, $\mu_2$ are equivalent by item (b) and so they must be identical. This implies 
    $$dm_1=h^{-1}d\mu_1=h^{-1}d\mu_2=dm_2.$$

\noindent    Finally, the ergodicity for item (c) was established in the proof of item (a) above. Also, the support argument was discussed in lemma \ref{limit inf bounded}.
\end{proof}

\bibliographystyle{alphaurl}
\bibliography{main.bib}
\end{document}